\newcommand\fake@math{}
\def\fake@math#1\){[math]}
\DeclareMathRadical{\sqrtsign}{symbols}{"70}{largesymbols}{"70}
\newcommand{\bb}{\mathbb}
\newcommand{\cx}{{\bb C}}
\newcommand{\integers}{{\bb Z}}
\newcommand{\natls}{{\bb N}}
\newcommand{\reals}{{\bb R}}
\newcommand{\proj}{{\bb P}}
\newlength{\figboxwidth}             
\renewcommand{\bold}[1]{\medskip \noindent {\bf #1 }\nopagebreak}
\newcommand{\dirsum}{\oplus}
\newcommand{\cross}{\times}
\newcommand{\st}{\;\: : \;\:}         
\newcommand{\zed}{\integers}
\newcommand{\SL}{\operatorname{SL}}
\def\@ifundefined#1#2#3%
\theoremstyle{plain} 
\newtheorem{theorem}{Theorem}[section]
\newtheorem{proposition}[theorem]{Proposition}
\newtheorem{lemma}[theorem]{Lemma}
\newtheorem{corollary}[theorem]{Corollary}
\newtheorem{claim}[theorem]{Claim}
\theoremstyle{definition} 
\newtheorem{definition}[theorem]{Definition}
\newtheorem{remark}[theorem]{Remark}
\newcommand{\cB}{{\mathcal B}}
\newcommand{\cC}{{\mathcal C}}
\newcommand{\cF}{{\mathcal F}}
\newcommand{\cG}{{\mathcal G}}
\newcommand{\cH}{{\mathcal H}}
\newcommand{\cM}{{\mathcal M}}
\newcommand{\cN}{{\mathcal N}}
\newcommand{\cP}{{\mathcal P}}
\newcommand{\cW}{{\mathcal W}}
\mathchardef\GG="321D
\newcommand{\bE}{{\mathbf E}}
\newcommand{\bF}{{\mathbf F}}
\newcommand{\bH}{{\mathbf H}}
\newcommand{\bV}{{\mathbf V}}
\newcommand{\bW}{{\mathbf W}}
\newcommand{\bfv}{{\mathbf v}}
\newcommand{\bfw}{{\mathbf w}}
\newcommand{\bbC}{{\mathbb C}}
\newcommand{\bbH}{{\mathbb H}}
\newcommand{\bbP}{{\mathbb P}}
\newcommand{\bbR}{{\mathbb R}}
\newcommand{\mcc}[1]{{}}
\title[Invariant measures for Projective Cocycles]
{Projective cocycles over $SL(2,\bbR)$ actions: measures invariant under the upper triangular group. }
\author{Christian Bonatti}
\author{Alex Eskin}
\author{Amie Wilkinson}
\begin{document}
\begin{abstract} We consider the action of $SL(2,\reals)$ on a vector
  bundle $\bH$ preserving an ergodic probability measure $\nu$ on the base $X$.   Under an irreducibility assumption on this action, we prove that if $\hat\nu$ is any lift of $\nu$ to a probability measure on the projectivized bunde $\proj(\bH)$ that is invariant under the upper triangular subgroup, then $\hat \nu$ is  supported in the projectivization $\proj(\bE_1)$ of the top Lyapunov subspace of the positive diagonal semigroup.  We derive two applications. First, the Lyapunov exponents for the Kontsevich-Zorich cocycle depend continuously on affine measures, answering a question in \cite{Matheus:Moeller:Yoccoz:Criterion}. Second, 
if $\proj(\bV)$ is an irreducible,  flat  projective bundle over a compact hyperbolic surface $\Sigma$, with hyperbolic foliation $\cF$ tangent to the flat connection, then the foliated horocycle flow on $T^1\cF$ is uniquely ergodic if the top Lyapunov exponent of the foliated geodesic flow is simple.  This generalizes results in \cite{Bonati:Comez-Mont} to arbitrary dimension.
\end{abstract}
\maketitle

\centerline{\em To the memory of Jean-Christophe.}

\section{Introduction}
\label{sec:intro}

Let $G$ denote the group $SL(2,\reals)$, and for $t\in\reals$, 
let
\begin{displaymath}
a^t = \begin{pmatrix} e^t & 0 \\ 0 & e^{-t} \end{pmatrix}, \qquad
u^t_+ = \begin{pmatrix} 1 &t \\ 0 & 1 \end{pmatrix}, \quad \hbox{and }\,
u^t_- = \begin{pmatrix} 1 & 0 \\ t & 1 \end{pmatrix}.
\end{displaymath}
Let $A = \{ a^t \st t \in \reals \}$, $U_+ = \{ u_+^t \st t \in \reals \}$, and $U_- = \{ u_-^t \st t \in \reals \}$.  
The group $G$ is generated by $A,U_+$ and $U_-$. 
We denote  by $P = A U_+$ the group of upper triangular matrices, and we 
recall that the group $P$ is solvable and hence \emph{amenable}:  any action of $P$ on a compact metric space 
admits an invariant probability measure (see, e.g. \cite[Section 8.4]{EinsiedlerWard}).  Recall that $PSL(2,\bbR)$ admits a natural identification
with the unit tangent bundle $T^1\bbH^2$ where $\bbH^2$ is the upper half plane endowed with the hyperbolic metric.  Through this identification, the left action of $SL(2,\bbR)$ on itself
induces an identification of $A=\{a^t\}$ with the geodesic flow, and $U_+$ (resp. $U_-$) with the unstable (resp. stable) horocycle flow of $\bbH^2$. 

In this paper, we consider the following situation.  Suppose $G$ acts
on a separable metric space $X$, preserving an ergodic Borel
probability measure $\nu$.  By the Mautner phenomenon (see e.g.\
\cite{Mozes:epimorphic}), $\nu$ is ergodic
with respect to the action of the diagonal group $A$. 
Let $\bH \to X$ be a continuous vector bundle over $X$ with fiber a finite dimensional vector space $H$,
and write  $\bH(x)$ for the fiber of $\bH$ over $x\in X$. 
Suppose that $G$ acts on $\bH$ by linear automorphisms on the fibers and
the given action on the base.
We denote the action of $g\in G$ on $\bH$ by $g_\ast$, so for $\bfv \in \bH(x)$, we have $g_\ast \bfv \in
\bH(gx)$.

{  Assume that $\bH$ is equipped with a Finsler structure (that is, a continuous choice of norm $\{\|\cdot\|_x : x\in X\}$ on the fibers of $\bH$), and
that with respect to this Finsler, the action of $G$ on $\bH$ satisfies the 
 following integrability condition:
\begin{equation}
\label{eq:cocycle:integrability}
\int_X \sup_{t\in [-1,1]} \left(\log\| a^t_\ast \|_x \right) \, d\nu(x) < +\infty,
\end{equation}
where, for $g\in G$,   $\| g_\ast \|_x$ denotes the operator norm of the linear action of $g_\ast$  on the fiber over $x$ with respect to the Finsler structure:
\[
\|g_\ast  \|_x  : = \sup_{\{\bfv\in H: \|\bfv\|_x =1\}} \| g_\ast \bfv\|_{gx}.
\]
}

Since the $A$-action on $X$ is ergodic with respect to $\nu$,  the Oseledets multiplicative ergodic theorem implies that  there exists 
a $a^t$-equivariant splitting
\begin{equation}
\label{eq:osceledts:two:sided:splitting}
\bH(x) = \bigoplus_{j=1}^m \bE_j(x),
\end{equation}
defined for $\nu$-almost
every $x \in X$,
and real numbers $\lambda_1 > \dots > \lambda_m$ (called the {\em Lyapunov
exponents} of the $A$-action) such that for $\nu$-a.e. $x\in X$ and all $\bfv \in \bE_j(x)$,
\begin{displaymath}
\lim_{|t| \to \infty} \frac{1}{t} \log \frac{ \| a^t_\ast  \bfv \|_{a^tx}}{\|\bfv\|_x} =
\lambda_j. 
\end{displaymath}

\begin{definition}[$\nu$-measurable invariant subbbundle]
\label{def:almost:invariant:splitting}
A $\nu$-measurable invariant subbundle for the $G$ action on $\bH$ is a measurable linear subbundle $\bW$   of $\bH$ with the property that $g_\ast \left( \bW(x) \right)= \bW(g x)$, 
for $\nu$-a.e. $x\in X$ and every $g\in G$.
\end{definition}

\begin{definition}[irreducible]
\label{def:irreducible}
We say that  the $G$ action on $\bH$  is {\em irreducible with respect to the $G$-invariant measure $\nu$ on $X$} 
if it does not admit $\nu$-measurable invariant subbundles: that is, if $\bW$ is a $\nu$-measurable invariant subbundle, then  either $\bW(x) = \{0\}$, $\nu$-a.e.  or  $\bW(x) = \bH(x)$, $\nu$-a.e.
\end{definition}
We note that the bundles $\bE_j$ in the splitting (\ref{eq:osceledts:two:sided:splitting})
are not invariant in the sense of
Definition~\ref{def:almost:invariant:splitting}, since they are
(in general) equivariant only under  the subgroup $A$ and not under all of $G$.

Let $\proj(H)$ be the projective space of $H$ (i.e., the space of
lines in $H$), and let   $\bbP(\bH)$ be the  projective bundle associated to $\bH$ with fiber $\bbP(H)$.
Then $G$ also acts on $\proj(\bH)$ via the induced projective action on the fibers. The space $ \proj(\bH)$ may
not support a $G$-invariant measure, but since $P$ is amenable and
$\proj(H)$ is compact, it
will always support a $P$-invariant measure. 
In particular, for any
$P$-invariant measure $\mu$ on $X$, there will be a
$P$-invariant measure $\hat{\mu}$ on $\proj(\bH)$ that projects to
$\mu$ under the natural map $\proj(\bH) \to X$. For such a $\mu$, denote by $ \cM^1_P(\mu)$ the (nonempty) set of all  $P$-invariant Borel probability measures on $\proj(\bH)$ 
projecting to $\mu$ on $X$.

We can now state our main theorem.
\begin{theorem}
\label{theorem:P:uniq:ergodicity}
 Suppose that the $G$-action on $\pi\colon \bH\to X$ is irreducible with respect to the $G$-invariant (and therefore $P$-invariant) measure $\nu$ on $X$, and let $\hat\nu\in  \cM^1_P(\nu)$.   Disintegrating $\hat\nu$ along the fibers of $\bbP(\bH)$, write
\begin{displaymath}
d\hat{\nu}([\bfv]) =  d\eta_{\pi(\bfv)}([\bfv])\,  d\nu(\pi(\bfv)),
\end{displaymath}
where $[\bfv]\in \proj(\bH)$ denotes the line determined by $\{0\} \neq \bfv\in \bH$.
Then for $\nu$-a.e. $x\in X$, the measure
$\eta_x$ on $\proj(\bH)(x)$ is supported 
on $\proj(\bE_1(x))$, where as in
(\ref{eq:osceledts:two:sided:splitting}), $\bE_1(x)$ is the Lyapunov
subspace corresponding to the top Lyapunov exponent of the $A$-action.

In
particular, if ${\bE}_1$ is one-dimensional, then $\# \cM^1_P(\nu) = 1$. 
\end{theorem}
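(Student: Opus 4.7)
The plan is to combine an Oseledets analysis of the disintegrated fiber measures $\eta_x$ under the $A$-flow with the $U_+$-invariance of $\hat\nu$ and the irreducibility hypothesis on the $G$-action. I begin by using ergodic decomposition to reduce to the case that $\hat\nu$ is $P$-ergodic; a Mautner-type argument inside $P$, exploiting the commutator identity $a^t u_+^s a^{-t} = u_+^{e^{2t}s}$, then upgrades $P$-ergodicity to $A$-ergodicity of $\hat\nu$.

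Next I introduce the fiberwise Lyapunov cocycle
\[
\Lambda(t,x) \;=\; \int_{\bbP(\bH(x))} \log \frac{\|a^t_\ast \bfv\|_{a^t x}}{\|\bfv\|_x}\, d\eta_x([\bfv]),
\]
which is an additive cocycle over the $A$-action on $(X,\nu)$ by virtue of the equivariance $(a^t)_\ast \eta_x = \eta_{a^t x}$. Birkhoff's ergodic theorem (justified by the integrability condition \eqref{eq:cocycle:integrability}) gives $\Lambda(t,x)/t \to \int \log(\|a^1_\ast \bfv\|/\|\bfv\|)\,d\hat\nu$ for $\nu$-a.e.\ $x$, while the pointwise Oseledets theorem combined with dominated convergence yields $\Lambda(t,x)/t \to \int \lambda_{j_+(x,[\bfv])}\,d\hat\nu$, where $j_+(x,[\bfv])$ is the smallest $j$ such that $\bfv$ has a nonzero $\bE_j(x)$-component. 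Since $j_+$ is $A$-invariant and $\hat\nu$ is $A$-ergodic, $j_+$ is $\hat\nu$-a.e.\ constant, equal to some $j_0$; hence $\eta_x$-a.e.\ $[\bfv]$ lies in $\bbP(\bF^{\geq j_0}(x))\setminus \bbP(\bF^{\geq j_0+1}(x))$, where $\bF^{\geq j}(x) := \bigoplus_{i\geq j} \bE_i(x)$.

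The main obstacle is to strengthen this to support on $\bbP(\bE_1(x))$; this requires (a) showing $j_0 = 1$ and (b) showing that the mass on directions with nonzero $\bE_1$-component is in fact concentrated on $\bbP(\bE_1)$ itself. My approach is to define the measurable sub-bundle $\cW(x) \subset \bH(x)$ as the smallest linear subspace whose projectivization carries full $\eta_x$-mass; by $P$-equivariance of $(x \mapsto \eta_x)$, $\cW$ is $P$-invariant, and by the previous step $\cW \subseteq \bF^{\geq j_0}$. The heart of the argument will be upgrading $P$-invariance of $\cW$ to $G$-invariance, after which irreducibility forces $\cW$ to agree with a $G$-invariant sub-bundle compatible with the Lyapunov flag, pinning $j_0 = 1$ and the support precisely to $\bbP(\bE_1)$. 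Since $U_-$-invariance is not given for free, this upgrade is the delicate step: I expect it to proceed via a limiting/averaging argument exploiting the conjugation $a^{-t} u_+^s a^t = u_+^{e^{-2t}s}$ (so that $U_+$-invariance of $\hat\nu$ translates, under iteration, into approximate invariance by arbitrarily small unipotents) combined with the strong $a^t$-contraction of $\bbP(\bF^{\geq 2})$ toward $\bbP(\bE_1)$ under forward iteration, to rule out any persistent mass outside of $\bbP(\bE_1)$. The final "in particular" statement is then immediate: if $\dim \bE_1 = 1$, then $\bbP(\bE_1(x))$ is a singleton, so $\eta_x$ must be the Dirac mass at that unique point; hence $\hat\nu$ is determined by $\nu$ and $\#\cM^1_P(\nu) = 1$.
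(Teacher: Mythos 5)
Your high-level plan (reduce to a $P$-ergodic component, extract a constant Oseledets index from $A$-ergodicity, produce a $P$-invariant subbundle supported by $\eta_x$, upgrade it to $G$-invariance, invoke irreducibility) has the right shape, and your Mautner argument upgrading $P$-ergodicity to $A$-ergodicity is legitimate (the paper uses the Mean Ergodic Theorem for the amenable group $P$ at the analogous juncture). But the step you yourself flag as ``delicate'' — upgrading $P$-invariance of $\cW$ to $G$-invariance — is precisely the heart of the theorem, and what you sketch does not fill it. The object $\cW(x)$ you define (the smallest subspace with $\eta_x(\bbP(\cW(x)))=1$) is built from $\hat\nu$, which is only $P$-invariant; there is no mechanism in your sketch to control how $\cW$ transforms under $U_-$, and ``approximate invariance by small unipotents combined with contraction'' does not yield this, because $U_-$ expands under the conjugation you are iterating in the direction you need.

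The paper resolves this by a genuinely different construction. Instead of the support $\cW$ of the measure, it defines the \emph{inert flag} $\bF_{\geq j}(x) = \{\bfv : u_\ast\bfv \in \bE_{\geq j}(ux)\ \text{for a.e. } u \in \cB\}$, built purely from the Oseledets flag and the $U_+$-action, independently of $\hat\nu$. The key point is that the Oseledets forward flag $\bE_{\geq j}$ is already $U_-$-equivariant (Lemma following (\ref{eq:invariance:g}), using that $U_-$ is contracted under $a^t$-recurrence), and the paper leverages the Bruhat-type decomposition $u\bar u = \bar u' u' a$ to transfer this to $U_-$-equivariance of $\bF_{\geq j}$ (Lemma~\ref{lemma:Fj:invariant:Uminus}). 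Together with $A$- and $U_+$-equivariance (Lemma~\ref{lemma:Ej:equivariant}), this makes $\bF_{\geq j}$ a $G$-invariant measurable subbundle, hence $\{0\}$ for $j\geq 2$ by irreducibility. A separate zero-one law (Lemma~\ref{lemma:zero:one:substitute}, proved via a Vitali density argument and Poincar\'e recurrence) is then needed to convert ``$\bF_{\geq 2}=\{0\}$'' into the statement that for every nonzero $\bfv$, the set of $u$ with $u_\ast\bfv\in\bE_{\geq 2}(ux)$ is \emph{null} rather than merely not full; only then does $U_+$-invariance of $\hat\nu$ force $\eta_x(\bbP(\bE_{\geq 2}(x)))=0$. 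Your sketch contains neither the inert-flag construction, nor the $U_-$-equivariance mechanism, nor the zero-one law, and the ``fiberwise Lyapunov cocycle'' computation — while plausible — is not used and would itself require a careful uniform-integrability argument to justify the interchange of limit and integral. So the proposal identifies the right target but leaves the central argument unproved.
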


The same conclusions of Theorem~\ref{theorem:P:uniq:ergodicity}  hold
when $G=SL(2,\reals)$ is replaced by any rank 1 semisimple Lie group,
and $P$ denotes the minimal parabolic subgroup of
$G$ (i.e.\ the normalizer of the unipotent radical of $G$).   

$P$-invariant measures are a natural object of study, as they are closely related to stationary measures of a $G$-action.  
Suppose that $G$ acts on a space $\Omega$, and let  $m$ be a Borel probability measure on $G$.  
Recall that a probability measure $\rho$ on $\Omega$ is  {\em $m$-stationary} if $m\ast \rho = \rho$, where for $A\subset \Omega$ measurable, we define
\[m\ast\rho (A) = \int_{G} \rho(g A) \, dm(g).
\]
A  compactly supported Borel probability measure $m$ on $G$ is {\em
  admissible} if the following two conditions hold: first, there
exists a $k\geq 1$ such that the $k$-fold convolution $m^{\star k}$ is
absolutely continuous with respect  to Haar measure; second,
$\hbox{supp}(m)$ generates $G$ as a semigroup.
Furstenberg  (\cite{F1}, \cite{F2}, restated
as \cite[Theorem 1.4]{Nevo:Zimmer}),
proved that there is a  1--1 correspondence between
$P$-invariant measures on $\Omega$  and $m$-stationary measures for
admissible $m$. In fact any $m$-stationary measure on $\Omega$ is of the form
$\lambda \ast \hat{\nu}$ where $\lambda$ is the unique $m$-stationary
measure on the Furstenberg boundary of $G$ (which is in our case the
circle $G/P$) and $\hat{\nu}$ is a $P$-invariant measure on $\Omega$.  

\bold{Remark.} 
In light of the discussion above, for any admissible measure $m$ on
$G$, Theorem~\ref{theorem:P:uniq:ergodicity} also gives a classification of
the $m$-stationary measures on ${\bH}$ projecting to $\nu$.
\medskip

In the context where $X = SL(2,\reals)/\Gamma$, with $\Gamma$ cocompact (or of finite covolume), $\bH$ a flat $H$-bundle over $X$, and $H = \reals^2$, $\cx^2$ or $\cx^3$, 
Theorem~\ref{theorem:P:uniq:ergodicity} was proved by Bonatti and Gomez-Mont \cite{Bonati:Comez-Mont}, where irreducibility is replaced with the equivalent  
hypothesis that $\rho(\Gamma)$ is Zariski dense, where $\rho$ is the monodromy representation.

Some constructions used in the proof of Theorem~\ref{theorem:P:uniq:ergodicity} are also used in \cite{EM} 
in their classification of $SL(2,\reals)$ invariant probability measures on moduli spaces.

\section{Applications and the irreducibility criterion}

The irreducibility hypothesis in Theorem~\ref{theorem:P:uniq:ergodicity}  is not innocuous.  Checking for the non-existence of invariant {\em measurable} subbundles is in general an impossible task, but there are two restricted contexts where it is feasible, on which we focus here:
\begin{itemize}
\item{Suppose that $X = G/\Gamma$, for some discrete
    subgroup $\Gamma \subset G$, and $\bH$ is a flat bundle over
    $X = G/\Gamma$ with monodromy representation
    $\rho\colon \Gamma\to GL(H)$.  Then $G$ acts transitively on $X$,
    the $P$-invariant measures on $X$ are all algebraic by
    \cite{Mozes:epimorphic} (which uses Ratner's Theorem), and
    irreducibility is then equivalent to the condition that there are
    no invariant algebraic subbundles of $\bH$.  In the case where
    $\nu$ is Haar measure, irreducibility of the associated $G$-action
    reduces to the condition that $\rho$ is an irreducible
    representation. In Subsection~\ref{ss=monodromy} we derive some
    consequences of Theorem~\ref{theorem:P:uniq:ergodicity} in this
    context.}
\item
If the bundle $\bH$ admits a Hodge structure (not necessarily
  $G$-invariant) then checking irreducibility can sometimes be reduced
  to showing that there are no invariant subbundles that are
  compatible with the Hodge structure, a much simpler task (since such
  subbundles must be real-analytic).  {In particular, the condition that $G$ acts transitively on 
the base  in the previous setting can be relaxed.}
This has been established rigorously by
  Simion Filip for the Kontsevich-Zorich action, and we use
  Theorem~\ref{theorem:P:uniq:ergodicity} in Subsection~\ref{ss=KZ} to
  deduce further results in that context.  

  The mantra here is that for
  such Hodge bundles whose base supports a $G$-invariant measure, any
{measurable} 
  $G$-invariant subbundle must come from algebraic geometry. For the
  Kontsevich-Zorich action, this has been established in \cite{EFW}.

\end{itemize}

We now describe the applications in more detail.

\subsection{Linear representations of \(G\)-lattices}\label{ss=monodromy}
$  $

The first application concerns the dynamics of the horocycle flow of a foliation with hyperbolic leaves.  If a manifold
$M$ has a 2-dimensional foliation $\cF$ whose leaves carry continuously varying hyperbolic structures, then there is a natural $G$-action on the unit  tangent bundle $T^1\cF\subset T^1M$ to the leaves, induced by the identification of $T^1{\bbH}^2$ with $PSL(2,\reals)$ mentioned in the introduction.  This gives rise to the foliated geodesic and horocyclic flows on $T^1\cF$.  

The
dynamical properties of these flows have geometric consequences for the foliations, including properties of harmonic measures (A probability measure on $M$ is  {\em harmonic} with respect to the foliation $\cF$ if it is invariant under leafwise heat flow: see \cite{Garnett}).    In particular, a probability measure on $M$ is harmonic along the leaves of $\cF$ if and only if it is the projection of a $P$-invariant measure on $T^1\cF$ (\cite{Martinez, BakhtinMartinez}).  We consider here the special case of foliations induced by representations of surface groups into linear groups.   First, we discuss the suspension construction.

\subsubsection{The suspension construction and a criterion for simplicity}
$ $

Let $\Gamma < SL(2,\reals)$ be a  lattice,  and let $\rho\colon \Gamma\to GL(H)$ be a representation, where $H=\reals^k$ or $\bbC^k$, $k\geq 2$.
Then $\Gamma$ acts on $SL(2,\reals)\times H$ by the diagonal action
$$
\gamma\colon (g, v)\mapsto ( g\gamma^{-1} ,  \rho(\gamma) v).
$$
The group $G=SL(2,\reals)$ also acts on $SL(2,\reals)\times H$, by left multiplication in the first factor, and trivially in the second factor.  The $\Gamma$ and $G$ actions commute.

Define the  {\em suspension} of $\rho$:
\[\bH_\rho = SL(2,\reals)\times H/\Gamma;\] it is a $H$-bundle over $SL(2,\reals)/\Gamma$ admitting the quotient (left) $G$-action.   The orbits of this $G$ action foliate $\bH_\rho$; this foliation is the quotient of the horizontal foliation  of $SL(2,\reals)\times H$ by $\Gamma$.  

The  suspension construction is quite general: if $\Gamma$ is the covering group of a normal cover $\hat M$ of a manifold $M$, and  $\rho\colon \Gamma\to Aut(X)$ is  a homomorphism into the automorphism group of an object $X$, then there is an associated fiber bundle $X_\rho \to M$ with fiber $X$ obtained by taking the quotient of $\hat M\times X$ by the  diagonal action of $\Gamma$, acting in the first factor by deck translations and in the second factor by $\rho$.  If a group $G$ acts  $\Gamma$-equivariantly on $\hat M$, then it acts  $\Gamma$-equivariantly on $\hat M\times X$, trivially in the second component.  This induces a $G$-action on $X_\rho$.  This suspension construction is also used to construct the Kontsevich-Zorich cocycle in the next section.

Returning to lattices in $SL(2,\reals)$, the following result gives a concrete criterion for establishing when the Lyapunov exponents of the $A$-action on $\bH_\rho$ with respect to Haar measure on $SL(2,\reals)/\Gamma$ are all distinct, a property often referred to as {\em simple Lyapunov spectrum}.

\begin{theorem}\label{t=EskinMatheus}\cite[Corollary 5.5]{Bader:Furman:ICM} Let $\Gamma < SL(2,\reals)$ be a lattice and $\rho\colon \Gamma\to {SL(H)}$ be a representation.
If  $\rho(\Gamma)$ is Zariski dense in {$SL(H)$,} then the Lyapunov spectrum for the $A$-action on $\bH_\rho$, with respect to Haar measure on $G/\Gamma$, is simple.
\end{theorem}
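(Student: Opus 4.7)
The plan is to apply Theorem~\ref{theorem:P:uniq:ergodicity} to each exterior power bundle $\wedge^k \bH_\rho$, and then to use Furstenberg--Guivarc'h boundary theory to force the Lyapunov spectrum to be simple.

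\textbf{Step 1 (Reduction to exterior powers).} Let $\lambda_1 \geq \cdots \geq \lambda_n$ denote the Lyapunov exponents of the $A$-action on $\bH_\rho$ with multiplicity, where $n = \dim H$. For each $k \in \{1, \ldots, n-1\}$, the top Lyapunov exponent of $A$ on $\wedge^k \bH_\rho$ equals $\lambda_1 + \cdots + \lambda_k$, and the corresponding top Lyapunov subspace $\bE_1^{(k)} \subset \wedge^k \bH_\rho$ has dimension one precisely when $\lambda_k > \lambda_{k+1}$. Hence simplicity of the Lyapunov spectrum is equivalent to $\dim \bE_1^{(k)} = 1$ for every such $k$.

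\textbf{Step 2 (Applying the main theorem).} The $SL(H)$-representation on each $\wedge^k H$ is irreducible, so Zariski density of $\rho(\Gamma)$ in $SL(H)$ forces $\Gamma$ to act irreducibly on $\wedge^k H$. Since $G$ acts transitively on $G/\Gamma$, any $\nu$-measurable $G$-invariant subbundle of $\wedge^k \bH_\rho$ is determined by a $\Gamma$-invariant subspace of the fiber $\wedge^k H$, hence is trivial. Theorem~\ref{theorem:P:uniq:ergodicity} applied to $\wedge^k \bH_\rho$ then asserts that every $\hat{\nu} \in \cM^1_P(\nu)$ on $\bbP(\wedge^k \bH_\rho)$ is supported in $\bbP(\bE_1^{(k)})$.

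\textbf{Step 3 (An atomic $P$-invariant measure).} Fix an admissible probability measure $m$ on $G$. Zariski density supplies an element of $\rho(\Gamma)$ acting proximally on $\wedge^k H$; combined with the irreducibility of Step 2, this places the monodromy random walk on $\wedge^k H$ in the proximal, strongly irreducible regime of Furstenberg--Guivarc'h--Raugi. Classical random-matrix theory then produces a unique $m$-stationary probability measure on $\bbP(\wedge^k \bH_\rho)$ projecting to $\nu$ whose fiberwise disintegration is a single Dirac mass $\delta_{\ell_k(x)}$ at a line $\ell_k(x)$. Via the correspondence between $m$-stationary and $P$-invariant measures recalled after Theorem~\ref{theorem:P:uniq:ergodicity}, this yields $\hat{\nu}_k \in \cM^1_P(\nu)$ supported on a measurable line subbundle $\ell_k \subset \wedge^k \bH_\rho$, and by Step 2, $\ell_k(x) \subset \bE_1^{(k)}(x)$ for $\nu$-a.e.\ $x$. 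If $\dim \bE_1^{(k)} \geq 2$, then the $A$-action on $\bE_1^{(k)}$ has only one Lyapunov exponent and is, after rescaling, measurably conformal, so $\bE_1^{(k)}$ admits additional proper $A$-invariant subbundles; $U_+$-averaging these would yield additional elements of $\cM^1_P(\nu)$ on $\bbP(\bE_1^{(k)})$ distinct from $\hat{\nu}_k$, contradicting the Furstenberg--Guivarc'h--Raugi uniqueness. Therefore $\dim \bE_1^{(k)} = 1$ for every $k$ and the Lyapunov spectrum is simple.

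\textbf{Main obstacle.} The crux is Step 3: constructing the atomic $m$-stationary measure on the suspended bundle, verifying that its fiberwise support is really a single line rather than a higher-dimensional piece of $\bbP(\bE_1^{(k)})$, and extracting uniqueness within $\cM^1_P(\nu)$ via the Furstenberg correspondence. The fiberwise input is the standard random-matrix theory of Guivarc'h--Raugi and Goldsheid--Margulis, but matching the single-Dirac stationary measure with the top Oseledets direction of the continuous-time $A$-flow requires a Kaimanovich-type identification relating stationary conditional measures to Lyapunov flags; this matching is essentially the substance of Bader--Furman's proof of their Corollary~5.5.
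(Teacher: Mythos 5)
The paper does not supply a proof of this statement at all: Theorem~\ref{t=EskinMatheus} is quoted verbatim as \cite[Corollary~5.5]{Bader:Furman:ICM}, with an explanatory remark that a proof in the Kontsevich--Zorich setting appears in \cite[Theorem~1]{Eskin:Matheus:Coding:Free} (combining \cite{Goldsheid:Margulis}, \cite{GR} and \cite{Furstenberg:bullitin}) and ``is straightforward to adapt.'' In particular, the paper does not derive Theorem~\ref{t=EskinMatheus} from Theorem~\ref{theorem:P:uniq:ergodicity}; logically the two results are independent. So your attempt to deduce simplicity from the paper's main theorem is an original route, and the right question is whether your Step~3 actually closes the argument. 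It does not.

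The chain in Step~3 breaks at the clause ``the $A$-action on $\bE_1^{(k)}$ has only one Lyapunov exponent and is, after rescaling, measurably conformal, so $\bE_1^{(k)}$ admits additional proper $A$-invariant subbundles.'' Having a single Lyapunov exponent does not make a cocycle measurably conformal (a one-parameter family of unipotent perturbations of a homothety, $t \mapsto e^{\lambda t}\bigl(\begin{smallmatrix}1 & t\\ 0 & 1\end{smallmatrix}\bigr)$, has one exponent and is not conformal), and even genuine conformality would not yield $A$-invariant subbundles of $\bE_1^{(k)}$ (a cocycle of rotations over an ergodic base has a single exponent, is conformal, and admits no invariant line field). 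Consequently there is no supply of ``additional $P$-invariant measures'' to feed into a uniqueness contradiction, and the $U_+$-averaging step is unsupported: averaging an $A$-invariant object over $U_+$ does not in general produce a $P$-invariant measure, and distinctness from $\hat\nu_k$ would also have to be argued. Finally, as you yourself say, the construction in Step~3 of a unique fiberwise-Dirac stationary measure and its identification with the Oseledets top line is precisely the content of the Bader--Furman/Eskin--Matheus argument; presenting it as ``classical random-matrix theory'' elides the correspondence between the $A$-flow Lyapunov flag on the suspension and the stationary boundary of the monodromy random walk, which is the actual mathematical substance being cited.

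So the structural framing (exterior powers, irreducibility from Zariski density, amenability of $P$) is fine and matches what one would do, but Step~3 both contains an incorrect deduction and outsources the core identification it is supposed to prove. If you want a proof within the framework of this paper, you would need to supply the discretization/boundary correspondence relating the $A$-flow cocycle on $\bH_\rho$ to a random walk on $\rho(\Gamma)$, apply Goldsheid--Margulis/Guivarc'h--Raugi to that walk, and then transfer simplicity back; Theorem~\ref{theorem:P:uniq:ergodicity} is not a shortcut around that transfer.
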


Theorem~\ref{t=EskinMatheus} is proved in the context of
Kontsevich-Zorich cocycles in \cite[Theorem
1]{Eskin:Matheus:Coding:Free}, {by combining the
  main result of \cite{Goldsheid:Margulis} and \cite{GR}
  with \cite[Theorem~3]{Furstenberg:bullitin}, (whose proof is given
  in \cite{Furstenberg:randomwalks}).} It is straightforward to adapt
the proof in \cite{Eskin:Matheus:Coding:Free} to give a proof of Theorem~\ref{t=EskinMatheus}.  The formulation in \cite{Bader:Furman:ICM} is more general than stated here and fits into a general theory (building on a long tradition initiated by Furstenberg) connecting Lyapunov spectrum with the theory of boundary actions.

\subsubsection{Foliated geodesic and horocyclic flows}

The suspension construction gives a natural way to construct a foliated bundle with hyperbolic leaves and compact leaf space.  Namely, given a representation $\rho\colon \Gamma \to GL(H)$ of a lattice $\Gamma<SL(2,\reals)$,  one constructs the  projective bundle  ${\mathbb P}(\check\bH_\rho)$ over  the hyperbolic surface $\bbH/\Gamma$
by suspending the induced  projective action of $\Gamma$ on $\proj(H)$, with $\Gamma$ acting by isometries on the first factor of $\check\bH = \bbH\times \proj(H)$. 

The  bundle  ${\mathbb P}(\check\bH_\rho)$ does not carry a $SL(2,\reals)$ action, but it does carry a natural {\em  foliation} (equivalently, a flat connection).  In particular, the (trivial) horizontal foliation of $\check\bH$ by hyperbolic planes descends under the $\Gamma$-quotient to a foliation  $\cF_\rho$ of  ${\mathbb P}(\check\bH_{\rho})$ by hyperbolic surfaces.  The leaf space of this foliation is $\proj(H)$, and the monodromy representation for $\cF_\rho$ is $\rho$.     Since the leaves of $\cF_\rho$ carry a Riemannian structure, the unit tangent bundle $T^1\cF_\rho$ to the foliation $\cF_\rho$ carries a natural flow, the foliated geodesic flow.  The construction of the foliated geodesic flow of a representation can be carried out quite generally, whenever $\Gamma = \pi_1(M)$, with $M$ a Riemannian manifold  (see, e.g. \cite[Example 5.2]{Bader:Furman:ICM}). 

Now recalling that the unit tangent bundle to $\bbH/\Gamma$ is $SL(2,\reals)/\Gamma$, we note that in the present setting
$T^1\cF_\rho$ is naturally identified with the bundle $\proj(\bH_\rho)$, obtained by again suspending the action of $\Gamma$ on  $\proj(H)$, this time with $\Gamma$ acting by right multiplication on the first factor of  $\bH = SL(2,\reals)\times \proj(H)$.    There is a natural projection  $\proj(\bH_\rho)$ to $\proj(\check\bH_\rho)$ induced by the
projection $SL(2,\reals)/\Gamma \to \bbH/\Gamma$.  The bundle  $\proj(\bH_\rho)$  admits an $SL(2,\reals)$ action;
the action of $A$ on  ${\mathbb P}(\bH_\rho)$  is the foliated geodesic flow, and the action of $U_+$ is the foliated (positive) horocycle flow.
 
Note that the bundle $\proj(\bH_\rho)$ is the projectivization of the bundle $\bH_\rho\to SL(2,\reals)/\Gamma$, which is the suspension of the $\rho$-action on $H$ defined above. This bundle also admits a $SL(2,\reals)$ action.
Bonatti and Gomez-Mont proved the following result, which connects the dynamics of the foliated horocycle flow $U_+$ on $\proj(\bH_\rho)$  with the Lyapunov exponents of the $A$-action on $\bH_\rho$.

\begin{theorem}[\cite{Bonati:Comez-Mont}] Let  $\rho\colon \Gamma\to GL(3, {\mathbb C})$ satisfy the integrability condition (\ref{eq:cocycle:integrability}), which holds automatically if $G/\Gamma$ is compact.    If there is no $\rho(\Gamma)$-invariant measure on  ${\mathbb C}{\mathbb P}^2$, and the largest Lyapunov exponent for the $A$-action on $\bH_\rho$  has multiplicity $1$, then there is a unique probability measure invariant under the foliated horocycle flow on ${\mathbb P}(\bH_\rho)$ and projecting to Lebesgue/Haar measure on $SL(2,\reals)/\Gamma$.  In particular, if $SL(2,\reals)/\Gamma$ is compact, then the  foliated horocycle flow   is uniquely ergodic.
\end{theorem}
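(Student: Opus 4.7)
The plan is to deduce the statement from Theorem~\ref{theorem:P:uniq:ergodicity} applied to $X=SL(2,\reals)/\Gamma$ with $\nu$ equal to Haar measure and $\bH=\bH_\rho$. Since $G$ acts transitively on $G/\Gamma$, any $\nu$-measurable $G$-invariant subbundle of $\bH_\rho$ arises from a $\rho(\Gamma)$-invariant subspace $W\subsetneq\cx^3$. A one-dimensional $W$ produces a Dirac measure at $[W]\in\cx\proj^2$, violating the hypothesis. A two-dimensional $W$ produces a $\rho(\Gamma)$-invariant $\cx\proj^1\subset\cx\proj^2$; either the restricted action admits an invariant probability measure (again violating the hypothesis), or passing to the dual representation one extracts an invariant line in $\rho^*$ whose corresponding invariant hyperplane in $\cx^3$ meets $W$ in an invariant line, reducing to the first case. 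So the $G$-action on $\bH_\rho$ is irreducible in the sense of Definition~\ref{def:irreducible}.

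By the multiplicity-one hypothesis, $\bE_1$ is a measurable line subbundle. It is moreover the top space of the past Oseledets filtration $\bF_1^-\subset\cdots\subset\bF_m^-=\bH$, and since $U_+$-orbits in $G/\Gamma$ converge exponentially under $a^{-t}$ the past Oseledets data propagates along $U_+$-orbits, making $\bE_1$ additionally $U_+$-invariant. Combined with its $A$-equivariance, the measurable section $s\colon x\mapsto\proj(\bE_1(x))$ is $P$-equivariant, and $\hat\nu^*:=s_*\nu$ is a $P$-invariant probability measure in $\cM^1_P(\nu)$; by Theorem~\ref{theorem:P:uniq:ergodicity} and the one-dimensionality of $\bE_1$ it is the \emph{unique} such measure.

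Now let $\hat\nu$ be any probability measure on $\proj(\bH_\rho)$ invariant under the foliated horocycle flow and projecting to $\nu$. Because $A$ normalizes $U_+$ and preserves $\nu$, each $a^t_*\hat\nu$ is again $U_+$-invariant and projects to $\nu$. Fiberwise, $a^t$ contracts $\proj(\bH(x))\setminus\proj\bigl(\bigoplus_{j\geq 2}\bE_j(x)\bigr)$ onto $\proj(\bE_1(a^tx))$ as $t\to+\infty$. The BGM hypothesis rules out positive $\hat\nu$-mass on $\bigcup_x\proj\bigl(\bigoplus_{j\geq 2}\bE_j(x)\bigr)$: such mass, after normalization and averaging that exploits both the $U_+$-invariance of $\hat\nu$ and the $\Gamma$-equivariance of the suspension, would descend to a $\rho(\Gamma)$-invariant probability measure on $\cx\proj^2$. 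Thus $a^t_*\hat\nu\bigl(\proj(\bE_1)\bigr)\to 1$ as $t\to+\infty$; since $\proj(\bE_1)$ is $A$-invariant, $\hat\nu\bigl(\proj(\bE_1)\bigr)=a^t_*\hat\nu\bigl(\proj(\bE_1)\bigr)$ for every $t$, so $\hat\nu(\proj(\bE_1))=1$. A probability measure supported on the graph of the section $s$ and projecting to $\nu$ is necessarily $s_*\nu=\hat\nu^*$, and we conclude $\hat\nu=\hat\nu^*$.

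The main obstacle is this ``no-mass'' claim: rigorously extracting a $\rho(\Gamma)$-invariant probability measure on $\cx\proj^2$ from hypothetical $\hat\nu$-mass on the generically \emph{non}-$U_+$-invariant complementary subbundle $\bigoplus_{j\geq 2}\bE_j$. The difficulty is precisely that the complement of $\bE_1$ does not carry a canonical $P$-invariant structure, so the restriction of $\hat\nu$ to this subbundle is not directly an object to which Theorem~\ref{theorem:P:uniq:ergodicity} applies; one must use the $\Gamma$-equivariant suspension structure to average the fiber measures into a genuine measure on $\cx\proj^2$. This is the same delicate interplay between $U_+$-invariance and the Oseledets decomposition that underlies the proof of Theorem~\ref{theorem:P:uniq:ergodicity} itself, and one ultimately reduces it to applying that theorem to an appropriately chosen quotient bundle.
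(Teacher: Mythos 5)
The paper does not itself prove the cited BGM theorem; the intended route is visible in the proof of its generalization, Theorem~\ref{t=uniqueergodicfoliation}. Your second paragraph is in order, but the third contains the essential gap, which you flag yourself. The paper's argument runs by contradiction: given a $U_+$-invariant $\mu$ projecting to $\nu$ with $\mu\neq\mu^+:=\sigma_*\nu$, one takes the part of $\mu$ singular to $\mu^+$, $U_+$-saturates and normalizes it to obtain a $U_+$-invariant probability $\mu_1$ projecting to $\nu$ whose fiberwise disintegration gives measure zero to $\proj(\bE_1(x))$. One then \emph{pushes $\mu_1$ forward by $a^{-t}$ and takes Ces\`aro averages in $t$}. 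Because under backward geodesic flow vectors off $\bE_1$ are driven away from the top Oseledets direction, any weak-$*$ limit of these averages is a $P$-invariant probability projecting to $\nu$ that is still not concentrated on $\proj(\bE_1)$; this contradicts Theorem~\ref{theorem:P:uniq:ergodicity}. The negative-time averaging is precisely what upgrades $U_+$-invariance to $P$-invariance so that the main theorem applies, and it is this construction your proof lacks. Trying instead to prove a ``no-mass'' statement for the \emph{non}-$P$-invariant subbundle $\bigoplus_{j\geq 2}\bE_j$ is the wrong target: that is essentially the content of Lemma~\ref{lemma:zero:one:substitute} inside the proof of Theorem~\ref{theorem:P:uniq:ergodicity}, which you should invoke through the theorem rather than redo.

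Two further defects. The last step of paragraph three is circular: you note $a^t_*\hat\nu(\proj(\bE_1))=\hat\nu(\proj(\bE_1))$ is constant in $t$, so the claimed ``$\to 1$'' statement is empty unless the constant already equals $1$; and in any case pointwise contraction onto the measure-zero set $\proj(\bE_1)$ does not force the $\hat\nu$-measure of that exact set to tend to $1$. In paragraph one, the reduction for a two-dimensional invariant $W\subset\cx^3$ is confused: the annihilator of $W$ is an invariant line in $(\cx^3)^*$, giving a $\rho^*(\Gamma)$-fixed point of $\proj((\cx^3)^*)$ rather than a measure on $\cx\proj^2$, and the ``corresponding invariant hyperplane'' is $W$ itself, so nothing reduces to the one-dimensional case. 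Theorem~\ref{t=uniqueergodicfoliation} simply assumes irreducibility (the original BGM hypothesis is Zariski density, which forces it); if you want to start from the ``no invariant measure on $\cx\proj^2$'' hypothesis you need a genuinely different argument for the codimension-one case.
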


{Our result generalizes the Bonatti--Gomez-Mont theorem to linear representations of $\Gamma$ in arbitrary dimension: }
\begin{theorem}\label{t=uniqueergodicfoliation}  Let $H={\mathbb C}^k$ or $\reals^k$, for $k\geq 2$, and let $\rho\colon \Gamma\to GL(H)$ satisfy the integrability condition (\ref{eq:cocycle:integrability}), which holds automatically if $G/\Gamma$ is compact.   If the representation $\rho$ is irreducible (i.e.,  if  $\rho(\Gamma)$ has no proper invariant subspace of $H$), and the largest Lyapunov exponent for the $A$-action on $\bH_\rho$ has multiplicity $1$, then  there is a unique probability measure invariant under the horocycle flow on ${\mathbb P}(\bH)$ and projecting to Lebesgue/Haar measure on $SL(2,\reals)/\Gamma$.  In particular, if $SL(2,\reals)/\Gamma$ is compact, then the foliated horocycle flow   is uniquely ergodic.
\end{theorem}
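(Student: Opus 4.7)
The plan is to apply Theorem~\ref{theorem:P:uniq:ergodicity} with $X = SL(2,\reals)/\Gamma$, $\nu$ the Haar measure, and $\bH = \bH_\rho$. First, I would verify the irreducibility hypothesis: via the suspension construction, a $\nu$-measurable $G$-invariant subbundle of $\bH_\rho$ corresponds (by transitivity of $G$ on the base, together with the natural identification of the fiber over $[e]\in G/\Gamma$ with $H$ up to the $\Gamma$-ambiguity $(g,v)\sim(g\gamma^{-1},\rho(\gamma)v)$) to a $\rho(\Gamma)$-invariant subspace of $H$. Hence irreducibility of $\rho$ is equivalent to irreducibility of the $G$-action on $\bH_\rho$ in the sense of Definition~\ref{def:irreducible}. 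The integrability condition (\ref{eq:cocycle:integrability}) holds by hypothesis, and the assumption that the top Lyapunov exponent has multiplicity $1$ forces $\dim\bE_1=1$. Theorem~\ref{theorem:P:uniq:ergodicity} then gives $\#\cM^1_P(\nu)=1$: there is a unique $P$-invariant probability measure $\mu^*$ on $\proj(\bH_\rho)$ projecting to Haar, with disintegration $\delta_{[\bE_1(x)]}$ for $\nu$-a.e.\ $x$; in particular, the measurable section $x\mapsto[\bE_1(x)]$ is $P$-equivariant.

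The remaining task is to upgrade uniqueness among $P$-invariant measures to uniqueness among $U_+$-invariant measures projecting to Haar. Let $\mu$ be such a $U_+$-invariant measure. The normalizer relation $a^t U_+ a^{-t}=U_+$ implies $a^t_\ast\mu$ is again $U_+$-invariant and projects to Haar, so the $A$-orbit $\{a^t_\ast\mu\}_{t\in\reals}$ lies in the convex compact set of $U_+$-invariant probability measures projecting to Haar. Any weak-$\ast$ limit of the Cesaro averages $\frac{1}{T}\int_0^T a^t_\ast\mu\,dt$ is simultaneously $U_+$- and $A$-invariant, hence $P$-invariant, and so equals $\mu^*$ by the previous paragraph. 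To upgrade this Cesaro convergence to the equality $\mu=\mu^*$, I would argue by contradiction: if $\mu\neq\mu^*$ then $\mu$ carries positive mass off the section $\proj(\bE_1)$, and exploiting the attractor structure of the Oseledets filtration for $a^{-t}$ (whose top Lyapunov direction is the bottom subbundle $\bE_m$, and more generally whose successive strata act as attractors for the fiber dynamics) one analyzes the Cesaro limits of $(a^{-t})_\ast\mu$ stratum by stratum. This produces a $P$-invariant probability measure projecting to Haar and carrying mass on $\proj(\bE_j)$ for some $j>1$, contradicting Theorem~\ref{theorem:P:uniq:ergodicity}, which forces every such measure onto $\proj(\bE_1)$.

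For the unique ergodicity statement when $G/\Gamma$ is compact, Furstenberg's unique ergodicity of the horocycle flow on $G/\Gamma$ forces every $U_+$-invariant probability measure on $\proj(\bH_\rho)$ to project to Haar, so the result reduces to the case already handled. The principal obstacle is the final rigidity step: the Cesaro convergence $a^t_\ast\mu\to\mu^*$ is strictly weaker than the equality $\mu=\mu^*$, and ruling out nontrivial $\mu$-mass off $\proj(\bE_1)$ requires a careful propagation argument through the Oseledets filtration that uses essentially both the simplicity of the top Lyapunov exponent and the $P$-equivariance of the section $\proj(\bE_1)$ furnished by Theorem~\ref{theorem:P:uniq:ergodicity}.
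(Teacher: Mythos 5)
Your proposal follows essentially the same route as the paper: establish irreducibility of the $G$-action from irreducibility of $\rho$ via transitivity of $G$ on the base, invoke Theorem~\ref{theorem:P:uniq:ergodicity} to get the unique $P$-invariant lift $\mu^*$ supported on the section $\proj(\bE_1)$, and then rule out any other $U_+$-invariant lift by pushing forward along $a^{-t}$, averaging, and using Oseledets to show the resulting $P$-invariant limit carries mass off $\proj(\bE_1)$, contradicting Theorem~\ref{theorem:P:uniq:ergodicity}. The only cosmetic difference is that the paper first passes to the singular part of $\mu$ relative to $\mu^*$ (so the measure being pushed assigns zero fiber mass to $\proj(\bE_1)$), whereas you work with $\mu$ directly and observe that $\mu\neq\mu^*$ already forces positive mass off the section; both phrasings isolate the same delicate backward-time averaging step, for which the paper cites \cite[Sections 4 and 5]{Bonati:Comez-Mont} and which you correctly flag as the principal obstacle.
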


\begin{proof} We first observe that the condition that $\rho$ is an irreducible representation implies that the $G$-action on $\bH_\rho$ is irreducible.   Suppose
that there were a proper, nontrivial $G$-invariant $\nu$-measurable subbundle of $\bH$.  Since the base $G/\Gamma = T^1\Sigma$ consists of a unique $G$-orbit,  this bundle must be continuous (indeed analytic). This bundle projects to a continuous subbundle in $\bH_{\Sigma,\rho}$ invariant under the monodromy of $\cF_\rho$.   Fixing a point $x\in X$ and considering the monodromy representation of $\pi_1(x,\Sigma) $ on the fiber of  $\bH_{\Sigma,\rho}$ over $x$, this monodromy-invariant subbundle of $\bH_{\Sigma,\rho}$ restricts to a  $\rho(\Gamma)$-invariant subspace of $H$, which gives a contradiction.


Since $\bE^1$ is one-dimensional, the fibers
of $\proj(\bE^1)$ consist almost everyhere of a single point, and so $\proj(\bE^1)$ is the image of a measurable section
$\sigma\colon SL(2,\reals)/\Gamma \to \proj(\bH)$.
Let $\mu^+ := \sigma_\ast\nu$  be the pushforward of Lebesgue/Haar measure $\nu$ on $SL(2,\reals)/\Gamma$ to $\proj(\bH)$ under $\sigma$, which is invariant under the $P$ action and ergodic under both $A$ and $U_+$ actions.

Suppose that $\mu$ is another measure on invariant under $U_+$ and projecting to $\nu$.
If   $\mu$ is absolutely continuous with respect to $\mu^+$, then  ergodicity of $\mu^+$ with respect to $U_+$ implies that $\mu=\mu_+$.  
We may assume that $\mu$ is not absolutely continuous with respect to $\mu_+$  
Taking the singular part of $\mu$, saturating by the $U_+$ action,  and renormalizing to be a probability, we obtain a measure  $\mu_1$ that is $U_+$ invariant and singular with respect to  $\mu^+$.     The projection of this measure to $G/\Gamma$  is absolutely continuous with respect to 
$\nu$ and hence is equal to $\nu$. The disintegration of $\mu_1$ along fibers assigns measure $0$ to $\proj(\bE^1)$.

Now push forward $\mu_1$ along the  action of  $a^{-t}$ and average, obtaining a limit measure $\nu$ that is $P$-invariant and projects to $\nu$.  
The details of this construction are worked out in \cite[Sections 4 and 5]{Bonati:Comez-Mont}.

 Since we took averages over negative time, Oseledets' theorem implies that $\nu$ is not supported on $\proj(\bE^1)$.  But this is a contradiction, by Theorem~\ref{theorem:P:uniq:ergodicity}, since the $G$-action on $H$ is irreducible.
\end{proof}

In light of Theorem~\ref{t=EskinMatheus}, we obtain a simple criterion for unique ergodicity of the foliated horocycle flow when $\Gamma$ is cocompact.


\begin{corollary}  \label{c=uniqueergodicfoliation}  Let $H={\mathbb C}^k$ or $\reals^k$, for $k\geq 2$, and let $\rho\colon \Gamma\to GL(H)$ satisfy the integrability condition (\ref{eq:cocycle:integrability}).   If the image  $\rho(\Gamma)$ is Zariski dense in $GL(H)$, then  there is a unique probability measure invariant under the horocycle flow on ${\mathbb P}(\bH)$ and projecting to Lebesgue/Haar measure on $SL(2,\reals)/\Gamma$.  

In particular, if $SL(2,\reals)/\Gamma$ is compact and $\rho\colon \Gamma\to GL(H)$ is any representation with Zariski dense image,  then the foliated horocycle flow is uniquely ergodic.
\end{corollary}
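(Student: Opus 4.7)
The strategy is to combine Theorem~\ref{t=EskinMatheus} with Theorem~\ref{t=uniqueergodicfoliation}: we verify that Zariski density of $\rho(\Gamma)$ in $GL(H)$ implies both hypotheses of Theorem~\ref{t=uniqueergodicfoliation}, namely irreducibility of $\rho$ and simplicity of the top Lyapunov exponent of the $A$-action on $\bH_\rho$, and then invoke that theorem directly.

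Irreducibility is immediate: if $W \subset H$ were a proper nonzero $\rho(\Gamma)$-invariant subspace, then its stabilizer in $GL(H)$ would be a proper closed algebraic subgroup containing the Zariski closure of $\rho(\Gamma)$, contradicting the assumption that this closure is all of $GL(H)$.

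Simplicity of the top exponent reduces, via a routine dévissage, to Theorem~\ref{t=EskinMatheus}, which is stated in terms of Zariski density in $SL(H)$. Specifically, the scalar cocycle $|\det \rho|^{1/k}$---defined, if necessary, on a finite-index subgroup $\Gamma' \leq \Gamma$ on which a measurable $k$-th root of the determinant exists---contributes a uniform additive shift to every Lyapunov exponent of $\bH_\rho$. Dividing $\rho$ by this scalar produces a cocycle $\tilde\rho$ taking values in a group isogenous to $SL(H)$, whose image is Zariski dense therein, and whose Lyapunov spectrum is that of $\rho$ shifted by a common constant. Theorem~\ref{t=EskinMatheus} yields simplicity for $\tilde\rho$, hence for $\rho|_{\Gamma'}$, and finally for $\rho$ itself, since the Lyapunov exponents for the $A$-action on $\bH_\rho$ are unchanged upon lifting Haar measure along the finite cover $G/\Gamma' \to G/\Gamma$.

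Having verified both irreducibility and multiplicity one of the top exponent, Theorem~\ref{t=uniqueergodicfoliation} delivers unique ergodicity of the foliated horocycle flow on $\proj(\bH_\rho)$. The final clause of the corollary is immediate: when $SL(2,\bbR)/\Gamma$ is compact, the operator norm $\| a^t_\ast \|_x$ is uniformly bounded for $(x,t) \in (SL(2,\bbR)/\Gamma) \times [-1,1]$, so the integrability condition~(\ref{eq:cocycle:integrability}) holds automatically. The only step above that is not purely formal is the $GL$-to-$SL$ reduction, and even that is cosmetic; no new ideas are required beyond the two already-stated theorems.
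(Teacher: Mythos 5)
Your strategy is the one the paper intends: the corollary is stated without a separate proof precisely because it is meant to follow by verifying the two hypotheses of Theorem~\ref{t=uniqueergodicfoliation} (irreducibility of $\rho$ and multiplicity one of the top exponent) and feeding the latter through Theorem~\ref{t=EskinMatheus}. Your irreducibility argument is correct, and your observation that the second clause of the corollary follows from compactness of $SL(2,\reals)/\Gamma$ together with continuity of the Finsler is also correct.

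The one place that deserves more care is the $GL$-to-$SL$ dévissage. Dividing by $|\det\rho|^{1/k}$ does not land in $SL(H)$ but in $\{A\st |\det A|=1\}$. Over $\reals$ that is $SL^{\pm}_k(\reals)$, which is fine (pass to the index-$\le 2$ subgroup with $\det>0$). Over $\cx$, however, $\{|\det|=1\}$ is a \emph{real} form of real codimension one inside $GL_k(\cx)$; it is not a complex algebraic subgroup, it is not isogenous to $SL_k(\cx)$, and the assertion that the twisted image is ``Zariski dense therein'' has no clean algebraic meaning. Also, a ``measurable $k$-th root'' is not what you want: to produce a genuine representation into $SL(H)$ the root must itself be a homomorphism $\Gamma'\to \cx^*$. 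The clean fix is to pass (Selberg's lemma) to a torsion-free finite-index subgroup $\Gamma'\le\Gamma$; then $\Gamma'$ is a free or surface group, so $(\Gamma')^{\mathrm{ab}}$ is free abelian and $\det\rho|_{\Gamma'}$, viewed as a character of $(\Gamma')^{\mathrm{ab}}$ valued in the divisible group $\cx^*$, admits a homomorphic $k$-th root $c$. Now $\tilde\rho:=c^{-1}\rho|_{\Gamma'}$ lands in $SL(H)$, and its image is Zariski dense there: since $GL(H)$ is Zariski-connected, $\overline{\rho(\Gamma')}=GL(H)$, so the image in $PGL(H)$ is dense; any subgroup $K\le GL(H)$ with $KZ=GL(H)$ ($Z$ the center) contains $[GL(H),GL(H)]=SL(H)$, hence $\overline{\tilde\rho(\Gamma')}\supseteq SL(H)$, and it is contained in $SL(H)$. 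Theorem~\ref{t=EskinMatheus} then applies to $\tilde\rho$ on $G/\Gamma'$, and the exponents of $\rho$ on $G/\Gamma$ differ from those of $\tilde\rho$ on $G/\Gamma'$ by a common additive constant (the finite cover does not change exponents), so the top exponent of $\rho$ is simple. With that adjustment your argument is complete and matches the paper's intended proof.
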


In light of the results in \cite{BakhtinMartinez}, we also obtain the following immediate corollary to Theorem~\ref{theorem:P:uniq:ergodicity}, which we state for simplicity in the cocompact setting.

\begin{corollary} Let $\Gamma<SL(2,\reals)$ be a cocompact lattice,  let $H={\mathbb C}^k$ or $\reals^k$, for $k\geq 2$, and let $\rho\colon \Gamma\to GL(H)$ be a representation such that $\rho(\Gamma)$ is Zariski dense.

 Consider the compact manifold $M={\mathbb P}(\check\bH_{\rho})\to \bbH/\Gamma$, and let $\cF_\rho$ be the horizontal foliation by hyperbolic surfaces.
 Then there is a unique probability measure on $M$ that is harmonic with respect to $\cF_\rho$.

\end{corollary}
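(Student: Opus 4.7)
The plan is to combine the unique ergodicity of the foliated horocycle flow from Corollary~\ref{c=uniqueergodicfoliation} with the dictionary between harmonic measures and $P$-invariant measures established in \cite{BakhtinMartinez}. The first step is to invoke the identification of $T^1\cF_\rho$ with $\proj(\bH_\rho)$ described before Corollary~\ref{c=uniqueergodicfoliation}: under this identification, the $U_+$-action on $\proj(\bH_\rho)$ is exactly the foliated horocycle flow, and the projection $\proj(\bH_\rho)\to\proj(\check\bH_\rho)=M$ is the circle-bundle projection $T^1\cF_\rho\to M$.

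Next, since $SL(2,\reals)/\Gamma$ is compact and $\rho(\Gamma)$ is Zariski dense in $GL(H)$, Corollary~\ref{c=uniqueergodicfoliation} asserts that the foliated horocycle flow on $\proj(\bH_\rho)$ is uniquely ergodic; call the unique $U_+$-invariant probability measure $\hat\mu$. Because $P=AU_+\supset U_+$, every $P$-invariant probability measure on $\proj(\bH_\rho)$ is in particular $U_+$-invariant and must therefore coincide with $\hat\mu$. Conversely, amenability of $P$ together with compactness of $\proj(\bH_\rho)$ guarantees that at least one $P$-invariant probability measure exists. Hence $\hat\mu$ is the unique $P$-invariant probability measure on $T^1\cF_\rho$.

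The final step is to apply the result of Bakhtin--Martinez \cite{BakhtinMartinez} quoted in the excerpt: a probability measure on $M$ is harmonic with respect to $\cF_\rho$ if and only if it is the projection under $T^1\cF_\rho\to M$ of a $P$-invariant probability measure on $T^1\cF_\rho$. Since the domain of this projection map consists of the single measure $\hat\mu$, its image consists of a single harmonic measure $\mu=\pi_\ast\hat\mu$ on $M$, which is therefore the unique harmonic measure for $\cF_\rho$.

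I do not expect a serious obstacle here: the corollary is essentially a translation, via the Bakhtin--Martinez correspondence, of the unique ergodicity of the foliated horocycle flow into the language of harmonic measures, and all of the substantive dynamics (irreducibility from Zariski density, simplicity of the top Lyapunov exponent via Theorem~\ref{t=EskinMatheus}, and the $P$-uniqueness coming from Theorem~\ref{theorem:P:uniq:ergodicity}) has already been packaged into Corollary~\ref{c=uniqueergodicfoliation}. The only point that warrants a careful check is that the identification $T^1\cF_\rho\cong\proj(\bH_\rho)$ intertwines the foliated horocycle flow with the $U_+$-action used in Corollary~\ref{c=uniqueergodicfoliation}, but this is explicit in the construction recalled earlier in the paper.
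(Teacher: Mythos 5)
Your argument is correct and follows the same route the paper intends: the paper states this result as an ``immediate corollary'' of Theorem~\ref{theorem:P:uniq:ergodicity} via the Bakhtin--Martinez dictionary, and your write-up just fills in the (short) details, routing through Corollary~\ref{c=uniqueergodicfoliation} for the unique ergodicity of the foliated horocycle flow and then passing from $U_+$-unique-ergodicity to uniqueness of the $P$-invariant measure on $T^1\cF_\rho\cong\proj(\bH_\rho)$. The only point worth being explicit about (which you do flag) is that the identification $T^1\cF_\rho\cong\proj(\bH_\rho)$ intertwines the foliated horocycle flow with the $U_+$-action and the circle-bundle projection $T^1\cF_\rho\to M$ with $\proj(\bH_\rho)\to\proj(\check\bH_\rho)$; both are set out in the paragraph of the paper preceding the corollary, so your proof is complete.
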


Matsumoto \cite{Matsumoto} recently showed that the same results do not hold for general foliations by hyperbolic surfaces:  on a ($3$-dimensional) solvmanifold, he constructs a foliation with hyperbolic leaves whose foliated horocycle flow is not uniquely ergodic and admitting more than one harmonic measure.

\subsection{The Kontsevich-Zorich cocycle}\label{ss=KZ}
$  $

{ 
In our second application of Theorem~\ref{theorem:P:uniq:ergodicity}, we deduce the continuity of Lyapunov exponents of affine invariant measures.

\bold{Flat surfaces and strata.}  Suppose $g \ge 1$, and let $\beta =
(\beta_1,\dots, \beta_m)$ be a partition of $2g-2$.  
{Let the
Teichm\"uller space $\Omega(\beta)$ be the space of pairs $(M,\omega)$
where $M$ is a Riemann surface of genus $g$ and $\omega$ is a
holomorphic 1-form on $M$ (i.e.\ an Abelian differential) whose zeroes
are labelled from $1$ to $m$, and such that the zero labeled $i$ has
multiplicity $\beta_i$. In $\Omega(\beta)$, $(M,\omega)$ and
$(M',\omega')$ are identified if there exists a holomorphic
diffeomorphism $f: M \to M'$ isotopic to the identity, pulling back
$\omega'$ to $\omega$ and preserving the labeling of the zeros. The
space $\Omega(\beta)$ is smooth and contractible.}

{
We also consider the moduli space $\cH(\beta)$ whose definition is the
same as that of $\Omega(\beta)$ except that in $\cH(\beta)$,
$(M,\omega)$ and $(M',\omega')$ are identified if there exists a
holomorphic diffeomorphism $f: M \to M'$ (not necessarily isotopic to
the identity) pulling back $\omega'$ to $\omega$ and preserving the
labeling of the zeroes. We will refer to $\cH(\beta)$ as a {\em
stratum of Abelian differentials}.
}

{
Let $S$ be a closed topological surface of genus $g$, and let $\Sigma$
be a collection of $m$ pairwise distinct points in $S$.  The group
$\hbox{Diff}^+(S,\Sigma)$ of diffeomorphisms of $S$ fixing the points
in $\Sigma$ acts on $ \Omega(\beta)$, with stabilizer containing the
subgroup $\hbox{Diff}_0(S,\Sigma)$ of null-isotopic
diffeomorphisms. Considering the quotient action, one obtains a
properly discontinuous action of the mapping class group
$\hbox{Mod}(S,\Sigma) :=
\hbox{Diff}^+(S,\Sigma)/\hbox{Diff}_0(S,\Sigma)$ on $ \Omega(\beta)$,
with quotient $\cH(\beta)$. Thus, the space $ \cH(\beta)$ is an
orbifold with universal cover $ \Omega(\beta)$.  For any $\beta$,
there is a $k$-fold finite cover $ \cH(\beta)_k$ of $\cH(\beta)$ in
which these orbifold points disappear: $ \cH(\beta)_k$ is a smooth
manifold covered by $\Omega(\beta)$.
}

For each $(M,\omega)\in  \cH(\beta)$, 
the form $\omega$ defines a
canonical flat metric on $M$ with conical singularities at the zeros
of $\omega$. Thus we refer to points of $ \cH(\beta)$ as
{\em flat surfaces} or {\em translation surfaces}. For an introduction
to this subject, see the survey \cite{Zorich:survey}. 

\bold{Affine measures and manifolds.} 
There is a well-studied action of  $G=SL(2,\reals)$ on $ \Omega(\beta)$; in this action, $g\in G$ acts linearly on translation structures via post-composition in charts.  This action preserves the subset
$ \Omega_1(\beta) \subset  \Omega(\beta)$ of marked surfaces of
(flat) area $1$.   This $G$-action commutes with the action of $\hbox{Mod}(S,\Sigma)$ and so descends to an action 
on  $ \cH(\beta)$ preserving the space  $ \cH_1(\beta)$ of flat surfaces of area $1$.}

An {\em affine invariant manifold} is
a closed subset of $ \cH_1(\beta)$ that is invariant under this
$G$ action and which in \textit{period coordinates} (see
\cite[Chapter ~3]{Zorich:survey}) looks like an affine subspace. Each affine invariant
manifold $\cN$ is the support of an ergodic $SL(2,\reals)$-invariant
probability measure $\nu_\cN$. Locally, in period coordinates, this
measure is (up 
to normalization) the restriction of Lebesgue measure to the subspace
$\cN$,  see \cite{EM} for the precise definitions. 
It is proved in \cite{EMM} that the closure of any
$SL(2,\reals)$ orbit is an affine invariant manifold; this is
analogous to one of Ratner's theorems in the theory of unipotent
flows. (In genus $2$, this result was previously proved by McMullen
\cite{McMullen:genus2}). 

To state our result, we will need the following:
\begin{theorem}[\protect{\cite[Theorem 2.3]{EMM}}]
\label{theorem:mozes-shah}
Let $\cN_n$ be a sequence of affine manifolds in $ \cH(\beta)$, and suppose
$\nu_{\cN_n} \to \nu$.  Then $\nu$ is a probability measure.
Furthermore, $\nu$ is the affine measure $\nu_\cN$, where $\cN$ is the
smallest submanifold with the following property: there exists some
$n_0 \in \natls$ such that $\cN_{n}\subset\cN$ for all $n>n_0$.

In particular, the space of ergodic $P$-invariant probability measures on
  $ \cH_1(\beta)$ is compact in the weak-star topology. 
\end{theorem}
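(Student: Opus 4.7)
The plan is to adapt the classical Mozes--Shah argument from homogeneous dynamics to the moduli space setting, replacing Ratner's theorem with the Eskin--Mirzakhani measure classification and replacing unipotent non-divergence with the Eskin--Masur quantitative non-divergence estimates on $\cH_1(\beta)$. The three ingredients I would use are: (i) every $P$-invariant ergodic probability measure on $\cH_1(\beta)$ is of the form $\nu_\cN$ for some affine invariant manifold $\cN$; (ii) quantitative non-divergence for $P$-orbits applied to a Siegel--Veech-type majorant; and (iii) the fact that affine invariant submanifolds form a countable family and are locally cut out by real linear equations in period coordinates.

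First I would prove that no mass escapes: for every $\epsilon>0$ there should be a compact $K\subset \cH_1(\beta)$ with $\nu_{\cN_n}(K)\geq 1-\epsilon$ uniformly in $n$. I would obtain this by applying the non-divergence estimates to the length function of the shortest saddle connection along $P$-orbits contained in each $\cN_n$. Because the Siegel--Veech transform is controlled by integration against $\nu_{\cN_n}$ and by the affine structure of $\cN_n$, one gets a uniform tail bound, independent of $n$. Tightness of the sequence $\{\nu_{\cN_n}\}$ follows, so the weak-$\ast$ limit $\nu$ is a Borel probability measure, still $P$-invariant by continuity of the $P$-action.

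Next I would decompose $\nu$ into $P$-ergodic components and apply the Eskin--Mirzakhani classification: every component is of the form $\nu_{\cN'}$ for some affine invariant $\cN'$. Using the countability of affine invariant submanifolds, the ergodic decomposition is at most a countable convex combination $\nu = \sum c_i\,\nu_{\cN_i'}$. Let $\cN$ denote the smallest affine invariant manifold containing the (closed) support of $\nu$; since affine manifolds form a countable collection and are closed in $\cH_1(\beta)$, the intersection of all affine manifolds of full $\nu$-measure is itself affine, so $\cN$ is well defined and $\nu(\cN)=1$.

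The main obstacle is the ``eventual containment'' $\cN_n \subset \cN$ for $n \geq n_0$, together with the conclusion that $\nu = \nu_\cN$ is a single affine measure rather than a proper convex combination. I would argue in period coordinates: near a generic point $x\in \cN$, both $\cN$ and every nearby $\cN_n$ are affine subspaces of a fixed chart $U$, and the restricted measures $\nu_{\cN_n}|_U$ are (up to normalization) Lebesgue on these subspaces. If $\cN_n \not\subset \cN$ along a subsequence, then the defining linear equations of $\cN_n$ would force a definite proportion of the support of $\nu_{\cN_n}$ to sit at bounded-below distance from $\cN$, contradicting weak-$\ast$ convergence to a measure concentrated on $\cN$. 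Countability of affine manifolds (together with a diagonal selection) promotes this contradiction to eventual containment, and the same local linear comparison forbids $\nu$ from splitting into distinct affine pieces, forcing $\nu = \nu_\cN$. The in-particular statement, compactness of the space of ergodic $P$-invariant probability measures, then follows at once: any sequence admits a subsequence converging to some $\nu_\cN$, which by the argument above is itself ergodic and hence lies in the space.
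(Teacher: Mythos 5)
The paper does not prove this result; it is quoted verbatim from \cite[Theorem 2.3]{EMM}, so there is no internal argument to compare against. Measured against what is actually required, your overall architecture is correct (tightness via non-divergence, ergodic decomposition of the limit, classification of $P$-ergodic measures via Eskin--Mirzakhani, countability of affine invariant submanifolds), and this is indeed the shape of the proof. But the step you yourself flag as the main obstacle --- eventual containment $\cN_n \subset \cN$ and the conclusion $\nu = \nu_\cN$ --- has a genuine gap as written.

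The local period-coordinate argument does not suffice. Knowing that, inside one fixed chart $U$ around a generic point of $\cN$, the affine subspaces $V_n$ cutting out $\cN_n$ converge to the subspace $V$ cutting out $\cN$ does not rule out the dangerous scenario: $\cN_n$ could be strictly larger, with the excess mass of $\nu_{\cN_n}$ carried by portions of $\cN_n$ outside $U$ that either fold back close to $\cN$ or accumulate in the thin part of the stratum, invisibly to any single chart. Tightness from step one rules out total escape of mass but not this kind of folding-back; and your claim that $\cN_n \not\subset \cN$ would force a definite fraction of $\nu_{\cN_n}$ to stay a bounded distance from $\cN$ is precisely the statement that needs a proof, not a consequence of period coordinates. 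In the homogeneous Mozes--Shah setting the corresponding step is likewise not local linear algebra; it uses Dani--Margulis linearization, which is a quantitative control on the time unipotent trajectories spend near proper invariant subvarieties. The moduli-space substitute is the EMM avoidance/isolation theorem (\cite[Theorem 2.2]{EMM}), which produces for each proper affine submanifold $\cM$ an $SO(2)$-invariant height function $f_\cM$ blowing up along $\cM$ and satisfying a Margulis-type averaging inequality for the $SL(2,\reals)$ dynamics. That inequality gives a uniform dichotomy: any $P$-invariant probability measure either lives on $\cM$ or gives definite mass to a fixed-distance complement of $\cM$. This is exactly what forces eventual containment and forbids the limit from splitting into distinct affine pieces. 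Without invoking the isolation theorem (or something equivalent), your argument does not close.
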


{  Since  $ \cH_1(\beta)$ is not compact, the weak star topology is
  defined using compactly supported functions.  In particular,
  $\nu_{\cN_n}\to \nu$ if and only if for any compactly supported
  continuous function $\phi$ on $\cH_1(\beta)$, we have that $\int \phi\,d\nu_{\cN_n} \to \int \phi\,d\nu$.}

\begin{remark}
\label{remark:mozes:shah}
In the setting of unipotent flows, Theorem~\ref{theorem:mozes-shah} is
due to Mozes and Shah \cite{MS}.
\end{remark}

\bold{The Kontsevich-Zorich cocycle.}{   The \textit{Hodge bundle} $\bH$ over $ \cH(\beta)$
is constructed in a similar fashion to the suspension construction in Subsection~\ref{ss=monodromy}.
We start with the bundle $\hat{\bH}$ over $ \Omega(\beta)$ whose fiber over  $(M,\omega)$ is the cohomology group
$H^1(M,\reals)$ (viewed as a $2g$-dimensional real vector space).   As $ \Omega(\beta)$ is contractible, this
bundle is trivial, diffeomorphic to $ \Omega(\beta)\times \reals^{2g}$.  We then consider the natural (right) action of the group $\Gamma = \hbox{Mod}(M,\Sigma)$ 
on  $\hat{\bH}$, where $\gamma\in \Gamma$ acts by pullback in the fibers, and we set
$\bH : = \hat{\bH}/\Gamma$, which is a bundle over $ \cH(\beta)$.
The Hodge bundle fails to be 
a vector bundle over the orbifold points of $ \cH(\beta)$, but passing to a finite branched cover, the corresponding 
Hodge bundle over   $ \cH(\beta)_k$ is a smooth vector bundle, and the $G$-action lifts to this bundle.  As the results below concern affine measures, which behave well under finite branched covers, {\em we will henceforth assume that $\bH$ is a vector bundle over $ \cH(\beta)$.}

There is a natural (left) $G$-action on $\hat{\bH}$ that is the standard $G$ action on $ \Omega(\beta)$ and trivial on the fibers.  This descends as in Subsection~\ref{ss=monodromy} to a $G$ action on $\bH$.
This $G$~action is often described by measurably trivializing $\bH$ and describing elements of this
action using elements of the symplectic group $Sp(2g,\zed)$.  To be precise, if we
choose a fundamental domain  in $ \Omega(\beta)$ for the action of the mapping class group
$\Gamma$, then we have the cocycle $\tilde{\alpha}: SL(2,\reals) \cross
 \cH_1(\beta) \to \Gamma$ where for $x$ in the fundamental domain,
$\tilde{\alpha}(g,x)$ is the element of $\Gamma$ needed to return the point
$gx$ to the fundamental domain. Then, we define the Kontsevich-Zorich
cocycle $\alpha(g,x)$ by 
\begin{displaymath}
\alpha(g,x) = \rho(\tilde{\alpha}(g,x)),
\end{displaymath}
where $\rho: \Gamma \to Sp(2g,\zed)$ is the homomorphism given by the
action on cohomology. The Kontsevich-Zorich cocycle can be interpreted as the
monodromy of the Gauss-Manin connection restricted to the orbit of
$SL(2,\reals)$, see e.g. \cite[page~64]{Zorich:survey}. 

Henceforth we will conflate action and cocycle and simply refer to this  $G$-action on $\bH$ as the Kontsevich-Zorich cocycle.   We further restrict (i.e. pull back) the bundle $\bH$ to  the space  $ \cH_1(\beta)$ of flat surfaces of area $1$.  As the $G$-action on $ \cH(\beta)$ preserves  $ \cH_1(\beta)$, we thus have a restricted $G$ action on the bundle
$\bH\to  \cH_1(\beta)$.  The discussion that follows concerns this action (cocycle) on this bundle.

There is a way to choose a Finsler structure on the Hodge bundle $\bH$
over $ \cH_1(\beta)$ so that the Kontsevich-Zorich cocycle satisfies
the integrability condition (\ref{eq:cocycle:integrability}) with
respect to {\em any} affine measure $\nu$ on any such $\cF$.  Moreover
this integrability is uniform: {there exists $C$ depending only on the
genus such that for any $x \in \cH_1(\beta)$, any $t \in [-1,1]$
and any $\bfv \in \bH(x)$,
\begin{displaymath}
C^{-1} \| \bfv \|_x \le \| a^t_* \bfv \|_{a^t x} \le C \| \bfv\|_x.
\end{displaymath}
(see \cite[Subsection 2.2.2]{Avila:Gouezel:Yoccoz} or \cite[Section
7.2]{EMM}). In particular,}
\begin{equation}
\label{eq:cocycle:uniformintegrability}
\int_Z \sup_{t\in [-1,1]} \left(\log\|a^t_* \|_x \right) \, d\nu(x) < \epsilon.
\end{equation}
The Lyapunov exponents of the Kontsevich-Zorich cocycle are defined with respect to this Finsler structure;
they exist and are constant almost everywhere with respect to any ergodic affine measure.}
These exponents appear in
other areas of dynamics, for example in the study of deviations of ergodic
averages of interval exchange tranformations \cite{Zorich:Deviation}
\cite{Forni:Deviation}, \cite{Bufetov} and in the study of the
diffusion rate of the periodic wind-tree model \cite{DHL:Windtree:diffusion}.

The Lyapunov spectrum of the Kontsevich-Zorich cocycle has been
studied extensively. Building on numerical experiments by Zorich, it was conjectured by Kontsevich in
\cite{Kontsevich} 
that for the Masur-Veech measures, the Lyapunov spectrum is simple: this Kontsevich-Zorich Conjecture
has been proved partially by Forni in \cite{Forni:Deviation} 
and then fully by Avila and Viana \cite{Avila:Viana}. For other
affine measures $\nu$ the situation is more complicated (and in
particular the spectrum need not be simple). Also the sum of the
Lyapunov exponents (and in some cases the Lyapunov exponents
themselves) can be computed explicitly. Recent papers in which the
Lyapunov spectrum of the Kontsevich-Zorich cocycle plays a major role include
\cite{Aulicino:thesis,
Aulicino:affine,
Aulicino:Lyapunov:genus3,
Athreya:Eskin:Zorich,
Bainbridge:Euler:char,   
Bainbridge:L:shaped, 
Bouw:Moeller,
Chen:Moeller:abelian,
Chen:Moeller:quadratic,
Delecroix:Matheus,
Delecroix:Zorich,
Deroin:Daniel,
EKMZ:lower:bounds,
Eskin:Kontsevich:Zorich,                
Eskin:Kontsevich:Zorich2, 
Eskin:Matheus:Coding:Free,
Filip,
Filip2,
Filip3,
Filip:Forni:Matheus,
Forni:Handbook,
Forni:new, 
Forni:Matheus:example,
Forni:Matheus:Zorich,
Forni:Matheus:Zorich:second:fund:form,
Forni:Matheus:Zorich:semisimplicity,
Kontsevich:Zorich, 
Hubert:Bourbaki,
Grivaux:Hubert:in:progress,
Matheus:appendix, 
Matheus:Yoccoz, 
Matheus:Yoccoz:Zmiaikou,
Matheus:Moeller:Yoccoz:Criterion,
Matheus:Weitze-Schmithusen,
Moeller:Shimura:curves,
Pardo:counting,
Pardo:error:term,
Pardo:non:varying,
Trevino,
Fei:Yu:conjecture,
Yu:Zuo:Weierstrass,
Wright:Abelian}.

The following theorem answers a question asked in 
\cite{Matheus:Moeller:Yoccoz:Criterion}.
\begin{theorem}
\label{theorem:convergence:of:Lyapunov:exponents}
Let $\cN_n$ be a sequence of affine manifolds, and suppose
the affine measures $\nu_{\cN_n}$ converge to the (affine) measure
$\nu$ (as in Theorem~\ref{theorem:mozes-shah}). 
Then the Lyapunov exponents of  $\nu_{\cN_m}$ (with respect to $a^t$) converge to the
Lyapunov exponents of $\nu$. 
\end{theorem}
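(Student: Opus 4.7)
For an affine measure $\nu$ and $k\ge 1$, set $L_k(\nu):=\lambda_1(\nu)+\cdots+\lambda_k(\nu)$; this equals the top Lyapunov exponent of the induced $A$-cocycle on the exterior power $\Lambda^k\bH$. Since the individual exponents are recovered from the sequence $(L_k)_{k\ge 1}$ by differences, it suffices to prove $L_k(\nu_{\cN_n})\to L_k(\nu)$ for every $k$. The upper semicontinuity half is immediate from subadditivity: by Kingman's theorem,
\[
L_k(\nu)\;=\;\inf_{t>0}\frac{1}{t}\int_{\cH_1(\beta)}\log\|a^t_\ast\|_{\Lambda^k\bH,\,x}\,d\nu(x),
\]
and by the uniform integrability estimate (\ref{eq:cocycle:uniformintegrability}) each integrand is bounded and continuous on $\cH_1(\beta)$, so each integral is weak-$\ast$ continuous in $\nu$ (no escape of mass, by Theorem~\ref{theorem:mozes-shah}); an infimum of continuous functions is upper semicontinuous, giving $\limsup_n L_k(\nu_{\cN_n})\le L_k(\nu)$.

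The plan for the reverse inequality is to apply Theorem~\ref{theorem:P:uniq:ergodicity}. For each $n$, I pick a $P$-invariant probability measure $\hat\nu_n\in \cM^1_P(\nu_{\cN_n})$ on $\bbP(\Lambda^k\bH)$ supported on the top Oseledets subspace $\bbP(\bE_1^{\,n})\subset \bbP(\Lambda^k\bH)$ for $\nu_{\cN_n}$. Such a measure is built by starting from any $P$-invariant lift of $\nu_{\cN_n}$ (available because $P$ is amenable and the projective fibers are compact), pushing forward under $a^{-t}$, and Cesaro-averaging; Oseledets' theorem forces the resulting measure to concentrate on $\bbP(\bE_1^{\,n})$. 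Because $\hat\nu_n$ is $a^1_\ast$-invariant and supported on the top subspace,
\[
L_k(\nu_{\cN_n})\;=\;\int_{\bbP(\Lambda^k\bH)}\log\frac{\|a^1_\ast v\|_{a^1 x}}{\|v\|_x}\,d\hat\nu_n([v]).
\]
Passing to a subsequence, Theorem~\ref{theorem:mozes-shah} for the base projections together with compactness of the projective fibers yields $\hat\nu_n\to\hat\nu$ weakly, where $\hat\nu\in\cM^1_P(\nu)$ is a $P$-invariant probability measure on $\bbP(\Lambda^k\bH)$. The uniform bound on the integrand from (\ref{eq:cocycle:uniformintegrability}) then gives
\[
\lim_n L_k(\nu_{\cN_n})\;=\;\int_{\bbP(\Lambda^k\bH)}\log\frac{\|a^1_\ast v\|_{a^1 x}}{\|v\|_x}\,d\hat\nu([v]).
\]

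It remains to identify the support of $\hat\nu$. By Filip's theorem referenced in Subsection~\ref{ss=KZ}, every $\nu$-measurable $G$-invariant subbundle of $\Lambda^k\bH$ is in fact algebraic, producing a $G$-invariant decomposition $\Lambda^k\bH=\bigoplus_j \bW_j$ into $G$-irreducible algebraic subbundles with respect to $\nu$. Theorem~\ref{theorem:P:uniq:ergodicity} applied componentwise to the restriction of $\hat\nu$ to each $\bW_j$ forces its fiberwise support onto the top Lyapunov subspace of each $\bW_j$; combined with the fact that each approximating $\hat\nu_n$ was already concentrated on the top Lyapunov subspace for $\nu_{\cN_n}$, the limiting measure $\hat\nu$ is trapped on $\bbP(\bE_1(\nu,\Lambda^k\bH))$, so the displayed integral evaluates to $L_k(\nu)$ and lower semicontinuity follows.

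The hard part is this last step. The $G$-irreducible decomposition of $\Lambda^k\bH$ depends genuinely on the affine measure, so the decomposition with respect to $\nu_{\cN_n}$ need not match the one with respect to $\nu$, and a priori mass of $\hat\nu$ could drift onto a component $\bW_j$ whose own top Lyapunov exponent is strictly less than $L_k(\nu)$. Ruling this out is what requires Theorem~\ref{theorem:P:uniq:ergodicity} and the algebraic rigidity supplied by Filip's theorem to work in tandem: the former controls where any $P$-invariant lift on an irreducible piece can sit, while the latter aligns the irreducible decompositions across the convergent sequence of affine manifolds.
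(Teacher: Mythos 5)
Your upper-semicontinuity argument via Kingman's formula is fine (uniform tightness from Theorem~\ref{theorem:mozes-shah} together with the Eskin--Masur compact exhaustion of $\cH_1(\beta)$ justifies exchanging the weak-$\ast$ limit with the integral of the bounded continuous integrand). The lower bound, however, has a gap exactly where you flag the ``hard part.'' After taking a weak-$\ast$ limit $\hat\nu$ on $\proj(\Lambda^k\bH)$ you want to invoke Theorem~\ref{theorem:P:uniq:ergodicity}, but that theorem requires irreducibility with respect to $\nu$, which fails on the full $\Lambda^k\bH$. Your proposed fix --- ``apply the theorem componentwise to the restriction of $\hat\nu$ to each $\bW_j$'' --- does not parse: a probability measure on $\proj(\Lambda^k\bH)$ does not restrict to $\proj(\bW_j)$, because it can be (and generically is) carried by lines $[\bfv]$ whose representative $\bfv$ has nonzero components in several $\bW_j$'s, and such lines lie outside $\bigcup_j\proj(\bW_j)$. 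Likewise, ``each $\hat\nu_n$ was concentrated on the top Lyapunov subspace for $\nu_{\cN_n}$, hence $\hat\nu$ is trapped on $\proj(\bE_1(\nu))$'' is not a deduction: the subbundles $\bE_1(\nu_{\cN_n})$ are only measurable and need not converge to $\bE_1(\nu)$, so mass can and a priori does drift. You notice this yourself, but the closing sentence asserting that the two theorems ``work in tandem'' is an aspiration, not an argument.

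The paper's proof avoids the problem by reversing the order of operations. It first invokes semisimplicity (\cite[Theorem~A.6]{EM}) together with Filip's theorem to get a \emph{continuous} (real-analytic) $G$-equivariant decomposition of $\Lambda^k\bH$ into $\nu_\cN$-irreducible pieces $W_i$ over $\cN$; because $\cN_n\subset\cN$ for large $n$, this one decomposition restricts to every $\cN_n$. Then, for each fixed $i$ separately, one builds a $P$-invariant lift $\hat\nu_n$ of $\nu_{\cN_n}$ living \emph{inside} $\proj(W_i)$ and supported on the top Oseledets subspace of $W_i$ for $\nu_{\cN_n}$, passes to a weak-$\ast$ limit (automatically supported in $\proj(W_i)$), and only then applies Theorem~\ref{theorem:P:uniq:ergodicity} to the cocycle restricted to $W_i$, where irreducibility with respect to $\nu_\cN$ does hold. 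Lemma~\ref{lemma:furstenberg:formula} then identifies both sides, convergence of the maximum over $i$ gives convergence of $L_k$, and differences recover the individual exponents. Decomposing first is what makes the argument close; trying to sort out the support of a single global limit measure on $\proj(\Lambda^k\bH)$ after the fact --- your route --- is precisely the step that resists. (One minor slip as well: to make a Cesaro average concentrate on the \emph{top} Oseledets subspace you push forward by $a^t$ with $t\to+\infty$, not by $a^{-t}$; pushing by $a^{-t}$ drives mass toward the bottom subspace $\bE_m$.)
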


The proof of Theorem~\ref{theorem:convergence:of:Lyapunov:exponents}
depends on Theorem~\ref{theorem:P:uniq:ergodicity} and the following
theorem of S.~Filip:

\begin{theorem}[\protect{\cite{Filip}}]
\label{theorem:filip}
Let $\alpha$ denote (some exterior power of) 
the Kontsevich-Zorich cocycle restricted
to an affine invariant submanifold $\cN$. Let $\nu_\cN$ be the affine
measure whose support is $\cN$, and suppose $\alpha$ has a
$\nu_\cN$-measurable almost-invariant subbundle $\bW$ (see  Definition~\ref{def:almost:invariant:splitting}). Then $\bW$ agrees $\nu_\cN$-almost everywhere with a continuous one.
\end{theorem}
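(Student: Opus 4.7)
The plan is to exploit the polarized variation of Hodge structure on the Hodge bundle together with the algebraicity of affine invariant manifolds, leveraging Theorem~\ref{theorem:P:uniq:ergodicity} applied to auxiliary exterior-power bundles. First, I would use Hodge-theoretic semisimplicity to reduce to the case where $\bW$ is ``irreducible'' in an appropriate sense. Although the Kontsevich-Zorich cocycle is not Hodge-isometric, the symplectic polarization combined with work of Forni, Forni--Matheus--Zorich and Filip yields a semisimplicity statement for measurable invariant subbundles: the Hodge-orthogonal complement of a measurable invariant subbundle is again measurable and invariant, so $\bH$ may be decomposed into measurable invariant pieces and each treated separately.

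The key step, and the main obstacle, is converting measurability into continuity. View $\bW$ as a measurable section of the Grassmannian bundle over $\cN$, or equivalently (via Pl\"ucker) as a measurable section of $\proj(\wedge^k \bH)$ with $k=\rank \bW$. The graph of this section defines an element of $\cM^1_P(\nu_\cN)$ on $\proj(\wedge^k\bH)$, so Theorem~\ref{theorem:P:uniq:ergodicity} forces $\wedge^k \bW$ to coincide almost everywhere with the top Lyapunov subspace of the $A$-action on $\wedge^k\bH$. The essential Hodge-theoretic input now enters: by Forni's formula, the Hodge norm of $a^t_\ast \bfv$ admits sharp two-sided bounds in terms of the Hodge decomposition of $\bfv$ at the basepoint. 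These bounds show that the top Lyapunov subspace, a priori only defined measurably, is sandwiched between continuous objects built from the Hodge data, and combining this with recurrence of typical $a^t$-orbits and the polynomial distortion estimate (\ref{eq:cocycle:uniformintegrability}) forces $\wedge^k\bW$ to agree almost everywhere with a continuous line field, hence $\bW$ with a continuous subbundle.

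Finally, I would promote continuity to real-analyticity. A continuous $G$-invariant subbundle is necessarily flat for the Gauss-Manin connection along $SL(2,\reals)$-orbits; by the density theorems of Eskin--Mirzakhani--Mohammadi together with algebraicity of $\cN$ in period coordinates, flatness propagates to the full Gauss-Manin connection on $\cN$. Deligne's theorem on flat sub-bundles of a polarized variation of Hodge structure then identifies $\bW$ with a sub-VHS, which is automatically real-analytic and indeed algebraic by Cattani--Deligne--Kaplan. The hardest point in this program is the middle step: the translation of the dynamical output of Theorem~\ref{theorem:P:uniq:ergodicity} into topological rigidity depends crucially on the special Hodge-theoretic structure of the Kontsevich-Zorich cocycle, and in particular on the subtle interplay between Forni's Hodge-norm bounds and recurrence that is unavailable for general cocycles over $SL(2,\reals)$-actions.
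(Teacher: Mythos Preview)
This theorem is not proved in the present paper: it is quoted from \cite{Filip} and used as a black box in the proof of Theorem~\ref{theorem:convergence:of:Lyapunov:exponents}. So there is no ``paper's own proof'' to compare your proposal against; rather, the question is whether your sketch constitutes an independent argument.

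It does not, and the central gap is in your invocation of Theorem~\ref{theorem:P:uniq:ergodicity}. That theorem has as a standing hypothesis that the $G$-action on the bundle is \emph{irreducible} with respect to $\nu$. You propose to apply it to $\wedge^k\bH$ with the $P$-invariant measure coming from the section $x\mapsto[\wedge^k\bW(x)]$. But the very existence of the $G$-invariant subbundle $\bW$ means that $\wedge^k\bW$ is a $G$-invariant line subbundle of $\wedge^k\bH$, so the irreducibility hypothesis fails and Theorem~\ref{theorem:P:uniq:ergodicity} gives no information. One might try to repair this by first passing to an irreducible summand of $\wedge^k\bH$, but producing that decomposition with continuous summands is exactly the content of (the exterior-power version of) Theorem~\ref{theorem:filip}, so the argument becomes circular.

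Even granting the conclusion you draw from Theorem~\ref{theorem:P:uniq:ergodicity}, your ``key step'' --- that the top Lyapunov subspace of the cocycle on $\wedge^k\bH$ agrees almost everywhere with a continuous subbundle --- is asserted rather than proved. Forni's variational formulas bound the Hodge norm growth, but they do not by themselves pin down the Oseledets spaces as continuous objects; in general the top Lyapunov subspace of a cocycle over an $SL(2,\reals)$-action is only measurable. What Filip actually does in \cite{Filip} is quite different: he shows directly, using Deligne semisimplicity for variations of Hodge structure and a delicate analysis of the real monodromy, that any measurable $G$-invariant subbundle must already be a sub-variation of Hodge structure, hence flat and polynomial in period coordinates. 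The dynamical input (Oseledets, $P$-invariant measures) plays no role there; the logical flow in the present paper is that Filip's Hodge-theoretic rigidity is an \emph{input} enabling the application of Theorem~\ref{theorem:P:uniq:ergodicity}, not a consequence of it.
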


In fact it is proved in \cite{Filip} that the
  dependence of any such $\bW(x)$ on $x$ is polynomial in the period coordinates.

\bold{Simplicity of Lyapunov Spectrum of Teichm\"uller curves.}
We recall that
a \emph{Teichm\"uller curve} is a closed $SL(2,\reals)$
orbit on $ \cH_1(\beta)$. Teichm\"uller curves (which are submanifolds
of real dimension $3$) are the smallest
possible affine manifolds; any other type of affine manifold has
dimension greater than $3$.

By the Kontsevich-Zorich classification
\cite{Kontsevich:Zorich:connected:components}, 
the stratum $ \cH(4)$ (in genus $3$) has two connected connected components
$ \cH(4)^{hyp}$ and $ \cH(4)^{odd}$. The connected component
$ \cH(4)^{hyp}$ consists entirely of hyperelliptic surfaces.

As a consequence of some recent results, we obtain the following:
\begin{theorem}
\label{theorem:Simplicity:H4hyp}
All but finitely many Teichm\"uller curves in 
$ \cH(4)$ have simple Lyapunov spectrum. 
\end{theorem}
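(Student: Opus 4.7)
The plan is to proceed by contradiction, chaining Theorem~\ref{theorem:convergence:of:Lyapunov:exponents} together with the compactness result Theorem~\ref{theorem:mozes-shah}, the recent classification of affine invariant submanifolds of $\cH(4)$, and the Avila--Viana simplicity theorem for Masur--Veech measures. Suppose, to the contrary, that there is an infinite sequence $\{\cN_n\}_{n\ge 1}$ of pairwise distinct Teichm\"uller curves in $\cH(4)$ whose Kontsevich--Zorich Lyapunov spectra are non-simple. Since $\cH(4)$ has only two connected components, and since for each $n$ there are only finitely many possible pairs of coinciding Lyapunov exponents (the rank of the Hodge bundle is bounded), after passing to a subsequence I may assume that all $\cN_n$ lie in a fixed connected component $\cH(4)^\epsilon$ (with $\epsilon\in\{hyp,odd\}$) and that some fixed pair of adjacent exponents $\lambda_i(\nu_{\cN_n}) = \lambda_{i+1}(\nu_{\cN_n})$ coincides for every $n$.

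Next I would apply Theorem~\ref{theorem:mozes-shah} to pass to a further subsequence along which $\nu_{\cN_n}\to \nu_\cN$ for some affine invariant manifold $\cN\subset\cH(4)^\epsilon$ containing $\cN_n$ for every sufficiently large $n$. Each Teichm\"uller curve has real dimension $3$; since infinitely many pairwise distinct such curves are contained in $\cN$, we get $\dim_\reals \cN > 3$. At this point I would invoke the classification of affine invariant submanifolds of $\cH(4)$ (due to Nguyen--Wright in $\cH(4)^{odd}$ and Aulicino--Nguyen--Wright in $\cH(4)^{hyp}$): the only proper affine submanifolds of either connected component are Teichm\"uller curves themselves. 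Consequently $\cN = \cH(4)^\epsilon$, and $\nu_\cN$ is the Masur--Veech measure on that component.

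By Theorem~\ref{theorem:convergence:of:Lyapunov:exponents} the Lyapunov exponents of $\nu_{\cN_n}$ converge to those of $\nu_\cN$, so the coincidence $\lambda_i = \lambda_{i+1}$ is inherited in the limit, yielding a non-simple Kontsevich--Zorich spectrum for the Masur--Veech measure on $\cH(4)^\epsilon$. This contradicts the Avila--Viana theorem, which asserts simplicity of the Kontsevich--Zorich Lyapunov spectrum for the Masur--Veech measure on every connected component of every stratum. The main obstacle is the appeal to the Nguyen--Wright / Aulicino--Nguyen--Wright classification of affine submanifolds of $\cH(4)$: without ruling out intermediate affine submanifolds, a sequence of Teichm\"uller curves with non-simple spectrum could in principle accumulate on one whose spectrum is not known to be simple, and the argument would stall; everything else in the chain---the extraction of a convergent subsequence, the passage of an exponent coincidence to the limit, and the application of Avila--Viana---is an essentially formal consequence of the results already recalled in the excerpt.
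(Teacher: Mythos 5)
Your argument follows the paper's strategy, but the classification step is stated incorrectly, and the omission is not harmless. The theorems of Nguyen--Wright (which treats $\cH(4)^{hyp}$, not $\cH(4)^{odd}$) and Aulicino--Nguyen--Wright (which treats $\cH(4)^{odd}$) do \emph{not} say that the only proper affine invariant submanifolds of either component are Teichm\"uller curves. In $\cH_1(4)^{odd}$ there is also the Prym locus $\cP$, an affine invariant submanifold of dimension greater than $3$ that is neither a Teichm\"uller curve nor all of $\cH_1(4)^{odd}$. So your limit manifold $\cN$ could equal $\cP$, and in that case $\nu_\cN$ is not a Masur--Veech measure, so Avila--Viana does not apply to it. The contradiction you are aiming for evaporates at exactly the point you yourself identified as the main obstacle.

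The paper closes this gap by invoking a separate result, Matheus--M\"oller--Yoccoz \cite[\S 6.6]{Matheus:Moeller:Yoccoz:Criterion}, which establishes simplicity of the Kontsevich--Zorich spectrum for the Prym locus $\cP$. With that in hand, all three possible limit manifolds ($\cH_1(4)^{hyp}$, $\cH_1(4)^{odd}$, and $\cP$) have simple spectrum, and Theorem~\ref{theorem:convergence:of:Lyapunov:exponents} delivers the contradiction. A couple of small further remarks: the subsequence extraction to pin down a single coinciding pair $\lambda_i=\lambda_{i+1}$ is superfluous --- once the limiting exponents are pairwise distinct, convergence of the exponents forces those of $\nu_{\cN_n}$ to be pairwise distinct for all large $n$ --- and the Lyapunov exponents should be computed for the measures on $\cH_1(4)$, i.e. after restricting to area-one surfaces, as in the paper.
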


Theorem~\ref{theorem:Simplicity:H4hyp} was shown in 
\cite{Matheus:Moeller:Yoccoz:Criterion} to follow from a conjecture
of Delecroix and Leli\`evre (stated in
\cite{Matheus:Moeller:Yoccoz:Criterion}). 
Our proof below is unconditional; however it is much less explicit
and is completely ineffective. It also 
depends on the very recent results of Filip \cite{Filip},
Nguyen-Wright \cite{Nguyen:Wright}, and \cite{Aulicino:Nguyen:Wright}.

\bold{Proof of Theorem~\ref{theorem:Simplicity:H4hyp}.} 
Suppose there exist infinitely many Teichm\"uller curves $\cN_n
\subset  \cH_1(4)$ with multiplicities in the Lyapunov
spectrum. By Theorem~\ref{theorem:mozes-shah}, the $\cN_n$ have to
converge to an affine manifold $\cN$ (in the sense that the affine
measures $\nu_{\cN_n}$ will converge to the affine measure $\nu_\cN$).
Furthermore, $\cN$ cannot be a Teichm\"uller curve (since by
Theorem~\ref{theorem:mozes-shah}, $\cN$ must contain all the $\cN_n$
for $n$ sufficiently large, and thus $\dim \cN > 3$). 
By the main theorems of
\cite{Nguyen:Wright} and \cite{Aulicino:Nguyen:Wright}  
the only proper affine submanifolds of
$ \cH_1(4)$ that are not Teichm\"uller curves  are the two connected
components $ \cH_1(4)^{hyp}$ and $ \cH_1(4)^{odd}$ and the Prym locus
$\cP$ of $\cH_1(4)^{odd}$. 
 Thus $\cN$ must be either 
$ \cH_1(4)^{hyp}$ or $ \cH_1(4)^{odd}$ or $\cP$. 
The Lyapunov spectrum of both $ \cH_1(4)^{hyp}$ and $ \cH_1(4)^{odd}$
is simple by \cite{Avila:Viana}, and the Lyapunov spectrum of $\cP$ is
simple by \cite[\S{6.6}]{Matheus:Moeller:Yoccoz:Criterion}
This contradicts
Theorem~\ref{theorem:convergence:of:Lyapunov:exponents}. 
\qed\medskip

\begin{remark}
\label{remark:improve:applicability}
We expect that the applicability of the proof of
Theorem~\ref{theorem:Simplicity:H4hyp} to increase as our knowledge of
the classification of affine invariant manifolds increases. 
\end{remark}

\begin{remark} It remains a challenge to find an approach to the
Delecroix-Leli\`evre conjecture itself which would leverage
  some equidistribution results. 
\end{remark}

\section{Construction Of An Invariant Subspace}
\label{sec:first:divergence}

We write $\cB := \{u_+^s \st s\in [-1,1] \}$ for the ``unit ball''  in $U_+$, and $\cB_t$ for $a^t
\cB a^{-t} = \{ u_+^s \st s\in [-e^{2t} ,e^{2t}]  \}$. 
We also write
\begin{displaymath}
\cB[x] = \{ u x \st u \in \cB \} \subset X, \qquad \cB_t[x] = \{ u x \st u \in \cB_t \} \subset X.
\end{displaymath}
Similarly, we use the notation
\begin{displaymath}
U_+[x] = \{ u x \st u \in U_+ \} \subset X.
\end{displaymath}

Recall that $\nu$ denotes a $G$-invariant measure on $X$ that is ergodic for the action of $A=\{a^t\}$. 
In what follows, when we write
``almost all $x$'' we mean ``all $x$ outside a set of
$\nu$-measure $0$."

\subsection{The forward and backward flags}
We consider the forward and backward flags for the action of $a^t$ on fibers, which define the Oseledets decomposition; let
\begin{displaymath}
\{0\} =  \bE_{\geq n+1} (x) \subset \bE_{\geq n}(x) \subset \dots \subset
\bE_{\geq 1}(x) = \bH(x)
\end{displaymath}
be the forward flag, and let
\begin{displaymath}
\{0\} =  \bE_{\leq 0}(x) \subset \bE_{\leq 1}(x) \subset \dots \subset \bE_{\leq n}(x) = \bH(x)
\end{displaymath}
be the backward flag. 
This means that for almost all $x$ and 
for $\bfv \in \bE_{\geq i}(x)$ such that $\bfv \not\in \bE_{\geq i+1}(x)$, 
\begin{equation}
\label{eq:bVi:growth:exactly:lambda:i}
\lim_{t \to + \infty} \frac{1}{t} \log \frac{\| a^t_\ast \bfv\|}{\|\bfv\|}
= \lambda_i,
\end{equation}
and for $\bfv \in \bE_{\leq i}(x)$ such that $\bfv \not\in  \bE_{\leq i-1}(x)$, 
\begin{equation}
\label{eq:growth:hat:V:n:plus:one:minus:i}
\lim_{t \to - \infty} \frac{1}{t} \log \frac{\|a^t_\ast \bfv\|}{\|\bfv\|} =
\lambda_{i}.
\end{equation}

By e.g. \cite[Lemma 1.5]{Goldsheid:Margulis},  for a.e. $x$ and every $i$, we have 
\begin{equation}
\label{eq:Lyapunov:transversality}
\bH(x) = \bE_{\leq i}(x) \dirsum \bE_{\geq i+1}(x).
\end{equation}
The bundles $\bE_1, \ldots ,\bE_n$ in the Oseledets decomposition  are then defined by the almost everywhere transverse intersections 
$\bE_i := \bE_{\leq i}\cap \bE_{\geq i}$.
Note that  $\bE_{\leq 1} = \bE_1$, $\bE_{\geq n} = \bE_n$ and in general,
$\bE_{\leq j} = \bE_1 \oplus \bE_2 \oplus \cdots \bE_j$ and 
$\bE_{\geq j} = \bE_j \oplus \bE_{j+1} \oplus \cdots \bE_n$ almost everywhere.

The bundles $\bE_{\geq i}$ and  $\bE_{\leq i}$ are evidently $A$-equivariant; for almost all $x \in X$ and every $t\in \bbR$, we have
\begin{equation}
\label{eq:invariance:g}
a^t_*\left( \bE_{\leq j}(x) \right)=  \bE_{\leq j}(a^t x), \qquad  a^t_*\left( \bE_{\geq j}(x)\right) =
\bE_{\geq j}(a^t x). 
\end{equation}

\begin{lemma}
For almost all $x \in X$, $u \in U_+$ and $\bar{u} \in U_-$, we have
\begin{equation}
\label{eq:invariance:Vj:hatVj}
u_* \left(\bE_{\leq j}(x) \right)=  \bE_{\leq j}(ux), \qquad  \bar{u}_* \left(\bE_{\geq j}(x) \right)=
\bE_{\geq j}(\bar{u} x). 
\end{equation}
\end{lemma}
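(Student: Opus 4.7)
The plan is to combine the commutation identity $a^t u_+^s = u_+^{se^{2t}} a^t$ with the Oseledets characterization of the backward flag via the growth rates~\eqref{eq:growth:hat:V:n:plus:one:minus:i}. The key observation is that conjugating $u_+^s$ by $a^{-T}$ produces $u_+^{se^{-2T}}$, whose parameter shrinks to zero as $T \to +\infty$. This should force any $U_+$-translate of a vector in $\bE_{\leq j}$ to remain in $\bE_{\leq j}$, since a nonzero component in $\bE_{\geq j+1}$ would grow strictly faster under $a^{-T}$ than the full image, violating the decay rate of the flag.

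Fix $x$ in the Oseledets-full-measure set, and $u = u_+^s$ such that $ux$ is also Oseledets-generic (by Fubini and the $U_+$-invariance of $\nu$, this holds for $\nu$-a.e.\ $x$ and Haar-a.e.\ $u$). Let $\bfv \in \bE_{\leq j}(x)\setminus\{0\}$ and decompose, using the a.e.\ transverse splitting~\eqref{eq:Lyapunov:transversality} at $ux$,
\[
u_* \bfv \EQ \bfw_- + \bfw_+, \qquad \bfw_- \in \bE_{\leq j}(ux),\ \ \bfw_+ \in \bE_{\geq j+1}(ux);
\]
the goal is to show $\bfw_+ = 0$. Applying $a^{-T}_*$ and using the commutation relation yields
\[
(u_+^{se^{-2T}})_*\, a^{-T}_* \bfv \EQ a^{-T}_* \bfw_- + a^{-T}_* \bfw_+.
\]
From~\eqref{eq:growth:hat:V:n:plus:one:minus:i} and the decomposition $\bE_{\leq j} = \bE_1 \oplus \cdots \oplus \bE_j$, for any $\epsilon > 0$ and all sufficiently large $T$ one has $\|a^{-T}_* \bfv\|_{a^{-T}x} \leq e^{(-\lambda_j + \epsilon) T}$, while if $\bfw_+ \neq 0$ then $\|a^{-T}_* \bfw_+\|_{a^{-T}(ux)} \geq e^{(-\lambda_{j+1} - \epsilon) T}$. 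Since $\lambda_j > \lambda_{j+1}$, choosing $\epsilon < (\lambda_j - \lambda_{j+1})/2$ makes these two estimates incompatible, provided one can absorb the operator norm $\|(u_+^{se^{-2T}})_*\|_{a^{-T}x}$ on the left and the norm of the projection onto $\bE_{\geq j+1}(a^{-T}(ux))$ parallel to $\bE_{\leq j}(a^{-T}(ux))$ on the right.

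The main obstacle is precisely this uniform control: the integrability hypothesis~\eqref{eq:cocycle:integrability} is stated only for the $A$-action, so a priori $\|(u_+^\sigma)_*\|_y$ can blow up along the $a^t$-orbit, and the Oseledets splitting is only measurable, so its transversality may degenerate. Both difficulties are handled by a standard Luzin-plus-Poincar\'e-recurrence argument: choose a compact set $K \subset X$ of $\nu$-measure close to $1$ on which $(y,\sigma) \mapsto \|(u_+^\sigma)_*\|_y$ is uniformly bounded for $|\sigma|$ small and on which the angle between the two measurable bundles $\bE_{\leq j}$ and $\bE_{\geq j+1}$ is bounded away from $0$. By Poincar\'e recurrence applied to the $A$-ergodic measure $\nu$, there is a subsequence $T_k \to +\infty$ with both $a^{-T_k} x$ and $a^{-T_k}(ux)$ in $K$; along this sequence $se^{-2T_k} \to 0$, so the estimates above yield the desired contradiction, forcing $\bfw_+ = 0$ and hence $u_* \bE_{\leq j}(x) \subseteq \bE_{\leq j}(ux)$. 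Equality follows from the $\nu$-a.e.\ constancy of $\dim \bE_{\leq j}$. The $U_-$-invariance of $\bE_{\geq j}$ is obtained by the symmetric argument, using $a^t u_-^s = u_-^{se^{-2t}} a^t$ with $t \to +\infty$ and the forward growth characterization~\eqref{eq:bVi:growth:exactly:lambda:i}.
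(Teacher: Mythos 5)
Your proof is correct and rests on the same two pillars as the paper's: the commutation relation that makes the conjugated unipotent element shrink as $T\to\infty$, and recurrence to control the resulting near-identity map. The paper's execution is lighter, though: rather than decomposing $u_*\bfv$ in the transverse splitting at $ux$ and showing the $\bE_{\geq j+1}$-component vanishes, it directly compares $\|a^t_*\bfw\|$ and $\|a^t_*\bfv\|$ along a subsequence of times when $a^tx$ returns near $x$, where $(u_{\pm}^{se^{\mp 2t}})_*$ is close to the identity by continuity of the $G$-action on the bundle. Because the Oseledets limit exists a priori, comparability of norms along one subsequence already pins down the exponent, so there is no need to control the angle between the measurable bundles $\bE_{\leq j}$ and $\bE_{\geq j+1}$ via a Luzin set, and no need for \emph{joint} recurrence of $x$ and $ux$. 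On that last point, note that your joint recurrence claim does not follow from Poincar\'e recurrence alone; it requires either a Birkhoff density argument (taking $\nu(K)>1/2$ and intersecting the two positive-density return-time sets) or the observation that $a^{-T}(ux)=u_+^{se^{-2T}}(a^{-T}x)$ is eventually arbitrarily close to $a^{-T}x$, combined with a Luzin set chosen so the relevant quantities are controlled on a fixed neighborhood of it. This is a fixable gap, but it is exactly the kind of overhead the paper's direct norm comparison avoids.
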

\begin{proof}Equation (\ref{eq:bVi:growth:exactly:lambda:i}) means that the bundle $\bE_{\geq j}(x)$ is characterized by the fact that the forward rate of expansion of vectors in $\bE_{\geq j}(x)$ under $a^t_\ast$
is no more than $\lambda_j$. For any $s,t\in\bbR$ the vectors $\bfv\in H_x$, for some $x\in X$, and $\bfw=(u^s_-)_*(\bfv)$ satisfy 
$$a^t_*(\bfw)=(u^{exp(-t)\cdot s}_-)_*(a^t_*(\bfv)).$$
 
Since the measure $\nu$ is $G$-invariant, almost every point $x$ is recurrent for $a^t$. 
Thus  the sequence of linear maps $(u^{exp(-t)\cdot s}_-)_*$ tend to the identity for large $t>0$ for which $a^t_*x$ is close to $x$. 
As a consequence, if the rates of expansion of the vectors $\bfv$ and $\bfw$ are defined, they are the same. 
This proves the invariance of the bundle  $\bE_{\geq j}(x)$  under $U_-$.
\end{proof}

\begin{remark} If for some $j\geq 2$, the bundle $\bE_{\geq j}$  were also invariant under almost every $u\in U_+$, then because $G$ is generated by $A, U_+$ and $U_-$, 
the bundle would be invariant under $G$, violating the irreducibility hypothesis on the action of $G$.
\end{remark}

As remarked before, the bundles $\bE_i$ and therefore $\bE_{\geq i}$ and $\bE_{\leq i}$ are measurable. This implies that, for any Borel probability measure for which these
bundles are almost everywhere defined, they are continuous when restriction to a set  whose measure is arbritrarily close to $1$. This implies the following statement.

\begin{lemma} Let $\bE\subset \bH$ be a measurable subbundle of $\bH$. For any compact subset $K\subset X$ and any probability measure $\theta$ on $K$ with respect to which
the bundle is defined $\theta$-almost everywhere, there is a set $Y\subset X$ of full $\theta$-measure such that any $y\in Y$ admits a subset $G_y\subset Y$ with the following properties: 
\begin{itemize}
 \item $\theta(G_y\cap U)>0$, for any neighborhood $U$ of $y$;
 \item the bundle $\bE$ is continuous on $G_y$. 
\end{itemize}
\end{lemma}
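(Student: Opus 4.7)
The plan is to apply Lusin's theorem to the measurable subbundle $\bE$, viewed as a measurable section of the Grassmann bundle $\mathrm{Gr}(\bH)\to X$ over the set where $\bE$ is defined, and then to pass to the topological supports of the restrictions of $\theta$ to the Lusin sets. These supports will be the neighborhoods $G_y$.

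First, using Lusin's theorem (exactly in the form announced in the sentence preceding the statement), for each positive integer $n$ I would choose a compact set $K_n\subset K$ with $\theta(K\setminus K_n)<1/n$ on which $\bE$ restricts to a continuous subbundle. Replacing $K_n$ by $\bigcup_{m\le n}K_m$ one may assume $K_n\subset K_{n+1}$, so that the union $K_\infty=\bigcup_n K_n$ has full $\theta$-measure.

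Second, let $S_n$ be the topological support of $\theta|_{K_n}$, i.e.\ the set of $y\in K_n$ such that $\theta(K_n\cap U)>0$ for every neighborhood $U$ of $y$. Since $X$ is separable metric, $S_n$ is closed in $K_n$ and $\theta(K_n\setminus S_n)=0$. I would then define
\[
Y := \bigcup_n S_n,
\]
which, by monotone convergence, has full $\theta$-measure. For each $y\in Y$, let $n(y)$ be the smallest index with $y\in S_{n(y)}$ and set $G_y:=S_{n(y)}\subset Y$. The continuity of $\bE$ on $K_{n(y)}$ (and hence on $G_y$) is built in, and the defining property of the support gives $\theta(G_y\cap U)=\theta(K_{n(y)}\cap U)>0$ for every neighborhood $U$ of $y$, which is the second required property.

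There is no real obstacle here: the argument is a packaging of Lusin's theorem together with the elementary fact that a probability measure on a separable metric space is fully concentrated on its support. The only mildly delicate point is the initial invocation of Lusin's theorem for a measurable subbundle; this is handled by identifying $\bE$ with a measurable section of the (fiberwise compact) Grassmann bundle $\mathrm{Gr}(\bH)$, which has already been remarked upon in the text immediately above the lemma.
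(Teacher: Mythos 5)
Your proof is correct and follows exactly the route the paper indicates in the sentence preceding the lemma: the paper states no explicit proof, but points to Lusin's theorem ("they are continuous when restricted to a set whose measure is arbitrarily close to $1$"). Your argument packages this with the standard facts that a finite union of compact sets on which a function is continuous is again a set on which it is continuous (the pasting lemma, since compact sets are closed), and that a Borel probability measure on a separable metric space gives full mass to its topological support (via second countability and Lindel\"of). The one point worth making explicit, which you gesture at, is that a measurable subbundle is a measurable section of the fiberwise-compact Grassmannian bundle $\mathrm{Gr}(\bH)\to X$ (taking the disjoint union over all dimensions if the rank is not constant), whose total space over $K$ is a separable metric space, so Lusin applies in its standard form. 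With that noted, the argument is complete.
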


The map from $\reals$ to $U_+$ defined by $t\mapsto u^+_t$ defines a
measure on $U_+$ by the pushforward of Lebesgue measure on
$\reals$. {This measure coincides with the Haar measure
  on $U_+$.} We refer to this measure as Lebesgue measure on $U_+$ and
denote it by $|\cdot |$.   The previous lemma, the $U_+$-invariance of $\nu$ and Fubini's theorem imply:

\begin{corollary}\label{c.continuity} In our setting there is a subset $Y\subset X$ of full $\nu$-measure such that for all $y \in Y$:
\begin{itemize}
 \item the bundles $\bE_{\geq j}(uy)$ are  well-defined for Lebesgue-almost every $u \in \cB$;
 \item there exists a  measurable set $\cG_y \subset \cB$ such that $|\cG_y \cap U|>0 $, for any neighborhood $U$ of the identity in $\cB$, and such that  $u\mapsto \bE_{\geq j}(uy)$ is continuous on $\cG_y$.
\end{itemize}

\end{corollary}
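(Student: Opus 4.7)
The idea is to handle the two bullets separately, in both cases using Fubini combined with $U_+$-invariance of $\nu$ to transfer $\nu$-a.e.\ statements on $X$ to Lebesgue-a.e.\ statements on $\cB$.

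For the first bullet, let $X_0 \subset X$ be the full-$\nu$-measure set on which every $\bE_{\geq j}$ is defined. Since $U_+$-invariance yields $\nu(u^{-1}X_0) = 1$ for every $u \in \cB$, the set $\{(y,u) \in X \times \cB : uy \notin X_0\}$ is $(\nu \times \mathrm{Leb})$-null. Fubini then gives that for $\nu$-a.e.\ $y$, the bundles $\bE_{\geq j}(uy)$ are well-defined for Lebesgue-a.e.\ $u \in \cB$.

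For the second bullet, apply the preceding Lemma to produce a nested sequence of compact sets $K_n \subset X$ with $\nu(K_n) \uparrow 1$ on which every $\bE_{\geq j}$ restricts to a continuous map (intersecting the Lemma's sets over all $j$ if necessary). The crux is to show that for each $n$ and $\nu$-a.e.\ $y \in K_n$, the identity is a Lebesgue density point of
\[
\cG_y^n \;:=\; \{u \in \cB : uy \in K_n\};
\]
in particular $|\cG_y^n \cap V| > 0$ for every neighborhood $V$ of the identity in $\cB$. I would deduce this from the classical Lebesgue density theorem on $\reals$ applied, for each $y$, to $E_y := \{t \in \reals : u_+^t y \in K_n\}$: almost every $t \in E_y$ is a density point of $E_y$, so by Fubini the set of $(y,t)$ with $u_+^t y \in K_n$ but $t$ not a density point of $E_y$ has product measure zero; Fubini in the other direction yields a Lebesgue-positive set of $t_0 \in \reals$ whose $y$-slice is $\nu$-null, and the substitution $y' = u_+^{t_0}y$, using $U_+$-invariance of $\nu$ together with the identity $E_{y'} = E_y - t_0$, transports the density condition from a.e.\ $t_0$ to $t_0 = 0$ for $\nu$-a.e.\ $y' \in K_n$.

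Assembly is then routine. Let $Y_n \subset K_n$ be the full-$\nu|_{K_n}$-measure subset on which the density statement holds, intersected with the full-measure set from the first bullet, and set $Y := \bigcup_n Y_n$, which has full $\nu$-measure because $\nu(K_n) \uparrow 1$. For $y \in Y$, pick the least $N$ with $y \in Y_N$ and define $\cG_y := \cG_y^N$. Positive Lebesgue measure in every neighborhood of the identity is exactly the density conclusion, while continuity of $u \mapsto \bE_{\geq j}(uy)$ on $\cG_y$ is immediate: the map $u \mapsto uy$ sends $\cG_y$ continuously into $K_N$ by construction, and $\bE_{\geq j}|_{K_N}$ is continuous. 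The main technical step is the density statement; everything else is bookkeeping. An alternative, closer in spirit to the preceding Lemma, would apply that Lemma with $K = \overline{\cB[y]}$ and $\theta$ the pushforward of normalized Lebesgue under $u \mapsto uy$, then transfer the conclusion from $\theta$-a.e.\ points of $\cB[y]$ to the identity via an analogous Fubini-plus-$U_+$-invariance argument.
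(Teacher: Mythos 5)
Your proof is correct, and the ingredients are the same ones the paper hints at (Lusin's theorem, Lebesgue density, Fubini, and $U_+$-invariance of $\nu$); the paper gives only the one-line justification ``The previous lemma, the $U_+$-invariance of $\nu$ and Fubini's theorem imply,'' and you have supplied the missing detail. One cosmetic imprecision: you say you ``apply the preceding Lemma to produce a nested sequence of compact sets $K_n$,'' but the preceding Lemma does not yield compact sets of continuity --- it asserts the density/continuity conclusion directly. What actually produces the $K_n$ is Lusin's theorem applied to the measurable section $x\mapsto\bE_{\geq j}(x)$ (intersected over $j$). In effect your argument bypasses the preceding Lemma as a black box and re-proves its content inline, deriving the ``identity is a density point of $\cG_y^n$'' statement via the one-dimensional Lebesgue density theorem on $E_y\subset\reals$ together with the change of variable $y'=u_+^{t_0}y$ (which correctly uses $E_{y'}=E_y-t_0$ and $U_+$-invariance of $\nu$). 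This is a bit more explicit than the paper's intended route --- which is your ``alternative'' sketch: apply the unnumbered Lemma with $K=\cB[y]$ and $\theta$ the pushforward of Lebesgue, then transfer the full-$\theta$-measure conclusion from a typical point of $\cB[y]$ back to the base point $y$ by Fubini and invariance --- but the two are morally identical, and your version has the advantage of not leaning on a lemma that the paper itself leaves unproved.
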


\subsection{The forward flag and the unstable horocycle flow: defining the inert flag.}

Our strategy for proving Theorem~\ref{theorem:P:uniq:ergodicity} is to show that if the measure $\hat \nu$ is not supported in $\bE_1$, 
then there is a nontrivial subbundle of some $\bE_{\geq j}$ that is  $G$-invariant, obtaining a contradiction.


{
To this end, we define the {\em inert flag}
\begin{equation}
\label{eq:flag:bFj}
\{0\} = \bF_{\geq n+1}(x) \subseteq \bF_{\geq n}(x) \subseteq \bF_{\geq n-1}(x) \subseteq
\dots \bF_{\geq 2}(x) \subseteq \bF_{\geq 1}(x) = \bH(x).
\end{equation}
where $\bF_{\geq j}(x)$, for $j=1, \ldots, m$ are defined by
\begin{equation}
\label{eq:def:bFj}
\bF_{\geq j}(x) = \{ \bfv \in \bH(x) \st \text{ for almost 
all $u \in \cB$, $u_* \bfv \in \bE_{\geq j}(u x)$} \}.
\end{equation}
In other words, $\bfv \in \bF_{\geq j}(x)$ if and only if,
for almost all $u \in \cB$ we have
\begin{equation}
\label{eq:meaning:bFj}
\limsup_{t \to \infty} \frac{1}{t} \log \|a^t_* u_* \bfv\| \le \lambda_j.
\end{equation}
From the definition it follows that the $\bF_{\geq j}(x)$ are  vector subspaces of $\bH(x)$ that
decrease with $j$, and thus they indeed form a flag. 
}


\begin{lemma}
\label{lemma:Ej:equivariant}
For almost every $x\in X$, the following hold:
\begin{itemize}
\item[{\rm (a)}] $\bF_{\geq j}(x) \subset  \bE_{\geq j}(x)$;
\item[{\rm (b)}] $\bF_{\geq j}(x) = \{ \bfv \in \bH(x) \st \text{ for almost 
all $u \in U_+$, $u_* \bfv \in \bE_{\geq j}(u x)$} \}$;

\item[{\rm (c)}]  $\bF_{\geq j}(x)$ is $A$-equivariant; i.e.,  
$a^t_*\left( \bF_{\geq j}(x) \right)= \bF_{\geq j} (a^t x)$, for all $t\in\bbR$; and

\item[{\rm (d)}] for almost all $u \in U_+$,  we have 
$u_* \left(\bF_{\geq j}(x)\right) = \bF_{\geq j}(ux)$.


\end{itemize}
\end{lemma}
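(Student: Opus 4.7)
I begin with the inclusion $\bF_{\geq j}(x) \subset \bE_{\geq j}(x)$ using Corollary~\ref{c.continuity}. Fix a $\nu$-generic $x$ and $\bfv \in \bF_{\geq j}(x)$. The full-measure subset of $\cB$ on which $u_*\bfv \in \bE_{\geq j}(ux)$ intersects $\cG_x$ in a set that still accumulates at the identity; pick a sequence $u_n \to \mathrm{id}$ inside this intersection. By the continuity of $u \mapsto \bE_{\geq j}(ux)$ on $\cG_x$ (after a measure-zero modification of the representative of $\bE_{\geq j}$ so that the continuous extension at the identity coincides with $\bE_{\geq j}(x)$ $\nu$-a.e.), $\bE_{\geq j}(u_n x) \to \bE_{\geq j}(x)$, while joint continuity of the $G$-action on $\bH$ gives $u_{n,*}\bfv \to \bfv$. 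Hence $\bfv \in \bE_{\geq j}(x)$.

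\textbf{Parts (c) and (b).} For (c) I first establish the one-sided inclusion $a^t_* \bF_{\geq j}(x) \subset \bF_{\geq j}(a^t x)$ for $t > 0$ and then upgrade to equality by ergodicity. Given $\bfv \in \bF_{\geq j}(x)$, the factorization $u a^t = a^t u'$ with $u' := a^{-t} u a^t$, combined with $A$-equivariance of $\bE_{\geq j}$, reduces the required condition $u_* a^t_* \bfv \in \bE_{\geq j}(u a^t x)$ to $u'_* \bfv \in \bE_{\geq j}(u' x)$. Since $t > 0$ forces $u' \in \cB_{-t} \subset \cB$ and the conjugation $\cB \to \cB_{-t}$ is a Lebesgue-nonsingular homeomorphism, the hypothesis on $\bfv$ delivers exactly this. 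Comparing dimensions and invoking $A$-ergodicity of $\nu$, $\dim \bF_{\geq j}$ is $\nu$-a.e.\ constant, so the one-sided inclusion is in fact an equality for $t > 0$; applying $a^{-t}_*$ extends (c) to all $t \in \bbR$. For (b), fix $T > 0$; by (c), $a^{-T}_* \bfv \in \bF_{\geq j}(a^{-T} x)$, so the definition gives $u_{0,*} a^{-T}_* \bfv \in \bE_{\geq j}(u_0 a^{-T} x)$ for a.e.\ $u_0 \in \cB$. Pushing by $a^T_*$ and setting $u := a^T u_0 a^{-T} \in \cB_T$ yields $u_* \bfv \in \bE_{\geq j}(ux)$ for a.e.\ $u \in \cB_T$, and since $U_+ = \bigcup_{T \in \bbN} \cB_T$ is a countable union, (b) follows.

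\textbf{Part (d) and main obstacle.} For (d), the containment $u_*(\bF_{\geq j}(x)) \subset \bF_{\geq j}(ux)$ for a.e.\ $u \in U_+$ is immediate from (b) combined with translation invariance of Haar measure on $U_+$: the required condition $(u'u)_*\bfv \in \bE_{\geq j}((u'u)x)$ for a.e.\ $u' \in U_+$ is exactly the conclusion of (b) after translating the variable $u' \mapsto u'u$. Equality then follows because $u_*$ is a bijection and, by $U_+$-invariance of $\nu$ and Fubini, $\dim \bF_{\geq j}(x) = \dim \bF_{\geq j}(ux)$ for a.e.\ $u$. The chief subtlety is the interdependence between (b) and (c): the definition of $\bF_{\geq j}$ quantifies only over the unit ball $\cB$, so only the forward half of $A$-equivariance is immediate from the definition. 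The key idea is to use $A$-ergodicity of $\nu$ to convert this one-sided inclusion into an equality, which then powers the bootstrap from $\cB$ up to all of $U_+$ needed to derive (b) and, in turn, (d).
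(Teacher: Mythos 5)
Your proof is correct and follows the same overall route as the paper: parts (b), (c), (d) are essentially verbatim the paper's arguments (conjugation of $\cB$ by $a^t$, upgrading the one-sided inclusion in (c) to an equality via $A$-ergodicity of the dimension function, right-translation invariance of Haar measure on $U_+$ in (d), and applying $u^{-1}$ or comparing dimensions for equality). The only notable variation is in (a): the paper decomposes $\bfv = \bfv_{\geq j} + \bfv_{\leq j-1}$ and argues that for $u \in \cG_x$ near the identity the $\bE_{\leq j-1}(ux)$-component of $u_*\bfv$ is bounded away from zero, yielding a contradiction; you instead pass directly to a limit along a sequence $u_n \to \mathrm{id}$ in $\cG_x$ and invoke closedness of the tautological Grassmannian incidence variety to conclude $\bfv \in \bE_{\geq j}(x)$. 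These are equivalent uses of Corollary~\ref{c.continuity}; yours is a touch more direct.

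One small point worth tightening: both your argument and the paper's need that the continuous map $u \mapsto \bE_{\geq j}(ux)$ restricted to $\cG_x$ takes the value $\bE_{\geq j}(x)$ in the limit at the identity. Corollary~\ref{c.continuity} as stated does not literally say $\mathrm{id} \in \cG_x$, and your parenthetical "measure-zero modification" gloss doesn't quite capture the fix. The clean justification is that, by the Lebesgue density theorem applied along $U_+$-orbits (using $U_+$-invariance of $\nu$), for $\nu$-a.e.\ $x$ the identity is an approximate continuity point of $u \mapsto \bE_{\geq j}(ux)$ with value $\bE_{\geq j}(x)$, so $\cG_x$ can indeed be taken to contain $\mathrm{id}$. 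This is implicit in the paper's argument as well, so it is not a gap peculiar to your write-up, but it is the one place where you should replace the informal remark with this density-point statement.
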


\begin{figure}[h]
\includegraphics[scale=0.35]{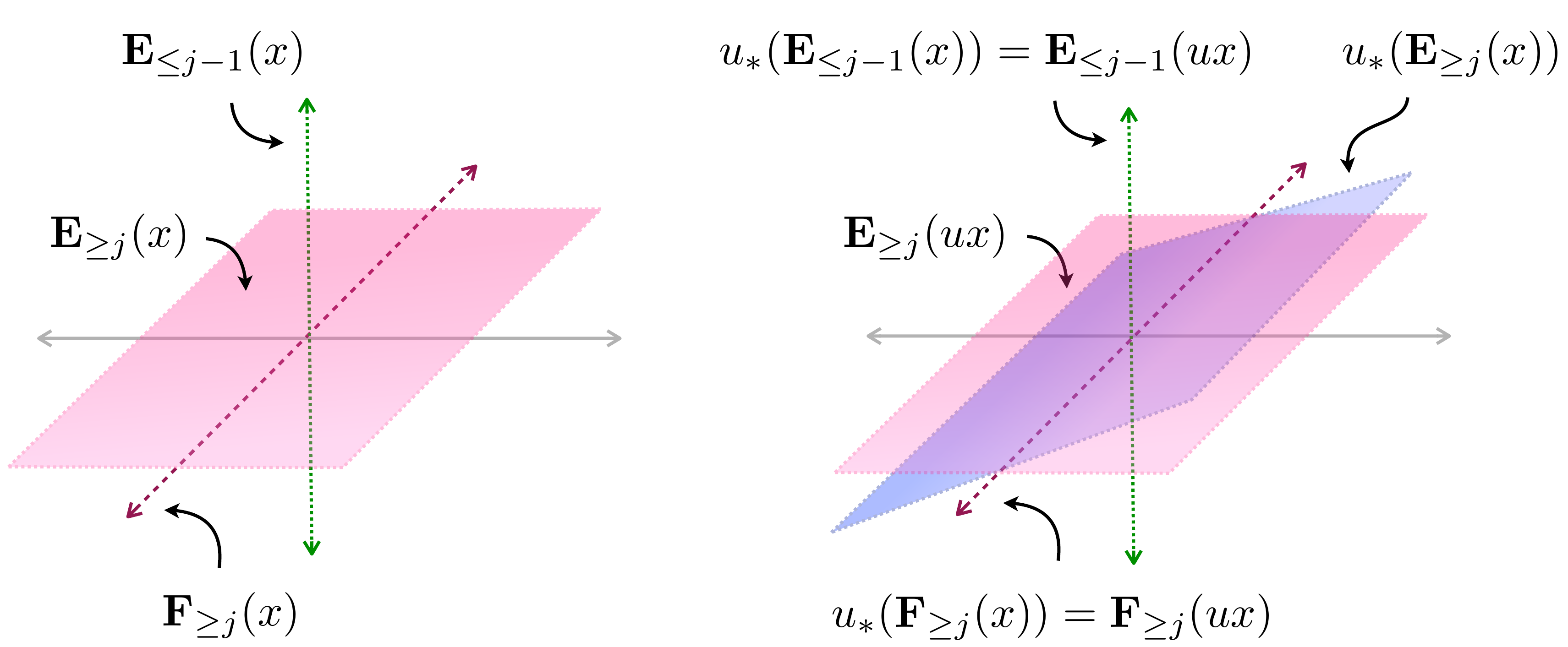}
\caption{Invariance properties of $\bE_{\leq j-1}, \bE_{\geq j}$, and $\bF_{\geq j}$ under almost every $u\in U_+$.}
\end{figure}

\bold{Proof.} 


(a)\, Consider $x\in X$  such that (\ref{eq:Lyapunov:transversality}) holds both for $x$ and for almost every point in $\cB[x]$ and such that $x$ belongs to the set $Y$ described in 
Corollary~\ref{c.continuity}.  Since  (\ref{eq:Lyapunov:transversality})  holds almost everywhere, Fubini's theorem implies that the set of such $x$ has full measure in $X$. 
Pick such an $x$, and let $\bfv\in \bF_{ \geq j}(x)$.  Then we can write  $\bfv = \bfv_{\geq j} + \bfv_{\leq j-1}$, where
$\bfv_{\geq j} \in \bE_{\geq j}(x)$, and 
$\bfv_{\leq j-1} \in \bE_{\leq j-1}(x)$. We have $u_\ast \bfv = u_\ast  \bfv_{\geq j} + u_\ast  \bfv_{\leq j-1}$.  

If $\bfv_{\leq j-1}  \neq 0$, then  (\ref{eq:invariance:Vj:hatVj})  implies
that for  almost every $u\in U_+$, we have $0 \neq u_\ast \bfv_{\leq j-1}\in  \bE_{\leq j-1}(ux)$.

By our choice of $x$, we have $ \bE_{\leq j-1}(ux)\cap  \bE_{\geq
  j}(ux) = \emptyset$ for almost every $u\in U_+$.  Now suppose $u \in
\cG_x$, where $\cG_x\subset \cB$ given by
Corollary~\ref{c.continuity}.  
We decompose $u_* \bfv_{\geq j}$ as the sum of a vector in $\bE_{\geq j}(ux)$ and a vector in $\bE_{\leq j-1}(ux)$; since both of these spaces vary continuously in $u\in \cG_x$, if $u\in \cG_x$ is sufficently close to the identity
in $U_+$, then  the component of   $u_* \bfv_{\geq j}$ in $\bE_{\leq j-1}(ux)$ is arbitrarily small, and in particular smaller than $u_* \bfv_{\leq j-1}$. 

This shows that $u_*\bfv \notin  \bE_{\geq j}(ux)$ for $u\in \cG_x$ sufficiently close to the identity.  But Corollary~\ref{c.continuity} implies that the set of such $u$ has positive Lebesgue measure in $\cB$; this contradicts the assumption 
that $\bfv\in \bF_{ \geq j}(x)$.  Therefore $\bfv_{\leq j-1} = 0$, and  $\bfv\in \bE_{ \geq j}(x)$, proving (a).

(c)\, Note that for $t \geq 0$, 
$a^t \cB[x]  = \cB_{t}[a^t x] \supset \cB[a^t x]$. It follows that 
\begin{equation}\label{e.inclusion}
a^t_*(\bF_{\geq j}(x))\subset\bF_{\geq j}(a^t x ).
\end{equation}
In 
particular, the dimension of $\bF_{\geq j}(x)$ is a measurable function, with values in $\{0,\dots, \dim(H)\}$, 
and which is nondecreasing along  $a^t$-orbits. Thus the sets $\{x\in X : \dim(\bF_{\geq j}(x))\geq i\}$ are positively invariant by $a^t$, $t\geq 0$. 
As the measure $\nu$ is assumed to be $A$-ergodic, it follows that there exists $i\in\{0,\dots, \dim(H)\}$ such that, for $\nu$-almost every $x$, the dimension of 
$\bF_{\geq j}(x)$ is equal to $i$.  For such a $\nu$-generic $x$, the inclusion in (\ref{e.inclusion}) becomes an equality:
\[
a^t_*(\bF_{\geq j}(x))=\bF_{\geq j}(a^t x),
\]
proving item (c).

(b)\, {Consider now an $x$ such that (c) holds,}
a vector $\bfv\in
\bF_{\geq j}(x)$, and  a large $t>0$.  Now $a^{-t}_* \bfv$ belongs
to $\bF_{\geq j}(a^{-t}x)$. Since $a^t \cB[a^{-t}x]  = \cB_{t}[x]$, we deduce that for every $t>0$, 
\[\bF_{\geq j}(x) = \{ \bfv \in \bH(x) \st \text{ for almost 
all $u \in \cB_{t}(x)$, $u_* \bfv \in \bE_{\geq j}(u x)$} \}.\]
As $t>0$ was arbitrary, this establishes (b).

(d)\, Consider $x$ satisfying (a) and (b), and let $\bfv\in \bF_{\geq
  j}(x)$.  Then for almost every $u\in U_+$, we have $u_\ast \bfv\in
\bE_{\geq j}(ux)$.  Fix such a $u$, and suppose that  $u_\ast \bfv \notin \bF_{\geq j}(ux)$. 
This implies there there exists a positive measure set of $u'\in U_+$ such that $u'u_\ast \bfv \notin \bE_{\geq j}(u'ux)$.  
But this violates the assumption that $\bfv\in \bF_{\geq j}(x)$.  Thus for almost all $u\in U_+$, we have 
$u_* \left(\bF_{\geq j}(x)\right) \subset \bF_{\geq j}(ux)$.  The reverse inclusion is proved by applying $u^{-1}$.  This gives (d).
\qed\medskip

%
\subsection{The inert flag is \(G\)-invariant}
\begin{lemma}
\label{lemma:Fj:invariant:Uminus}
For almost all $\bar{u} \in U_-$ and almost all $x \in X$, $\bar{u}_*
\left(\bF_{\geq j}(x) \right)= \bF_{\geq j}(\bar{u} x)$. 
\end{lemma}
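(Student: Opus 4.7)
The plan is to exploit a Bruhat-type decomposition in $SL(2,\reals)$ combined with the freedom, coming from part (b) of Lemma~\ref{lemma:Ej:equivariant}, to test membership in $\bF_{\geq j}$ against an arbitrarily small neighborhood of the identity in $U_+$. A direct matrix computation will yield, whenever $1+rs > 0$, the identity
\begin{equation}
u_+^r u_-^s \;=\; u_-^{s/(1+rs)}\, a^{\log(1+rs)}\, u_+^{r/(1+rs)}.\label{eq:bruhat:plan}
\end{equation}
This reduces the desired $U_-$-invariance of $\bF_{\geq j}$ to three facts already in hand: the characterization of membership in $\bF_{\geq j}$ via almost every $u \in U_+$ sending $\bfv$ into $\bE_{\geq j}$ (Lemma~\ref{lemma:Ej:equivariant}(b)); the $A$-invariance of $\bE_{\geq j}$ from (\ref{eq:invariance:g}); and its $U_-$-invariance from (\ref{eq:invariance:Vj:hatVj}).

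Concretely, I would first fix $s \in \reals$ and, via the $G$-invariance of $\nu$ and Fubini, a full-$\nu$-measure set of $x \in X$ at which the relevant invariance and continuity properties hold at each of $x$, $u_-^s x$, and at a full-measure set of intermediate points traced out in (\ref{eq:bruhat:plan}). Given such an $x$ and a $\bfv \in \bF_{\geq j}(x)$, I will appeal to the proof of Lemma~\ref{lemma:Ej:equivariant}(b) applied at $u_-^s x$: membership $(u_-^s)_*\bfv \in \bF_{\geq j}(u_-^s x)$ can be tested against $\cB_{t_0}$ in place of $\cB$ for any $t_0 \in \reals$. I choose $t_0$ so small that $e^{2t_0}|s| < 1$; it then suffices to establish
\[
(u_+^r u_-^s)_*\bfv \;\in\; \bE_{\geq j}(u_+^r u_-^s x) \qquad \text{for Lebesgue-a.e. } r \in [-e^{2t_0}, e^{2t_0}].
\]
By choice of $t_0$, every such $r$ satisfies $1+rs > 0$, so (\ref{eq:bruhat:plan}) applies with $t'(r) := r/(1+rs)$, $\tau(r) := \log(1+rs)$, $s'(r) := s/(1+rs)$ all real. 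Since $r \mapsto t'(r)$ is a smooth diffeomorphism onto its image, Lemma~\ref{lemma:Ej:equivariant}(b) at $x$ yields $(u_+^{t'(r)})_*\bfv \in \bE_{\geq j}(u_+^{t'(r)} x)$ for a.e.\ $r$; applying $(a^{\tau(r)})_*$ preserves $\bE_{\geq j}$ by (\ref{eq:invariance:g}); and applying $(u_-^{s'(r)})_*$ does so by (\ref{eq:invariance:Vj:hatVj}) (valid for a.e.\ $s'$, hence for a.e.\ $r$ after a further smooth change of variable). Composing yields the displayed inclusion, so $(u_-^s)_* \bF_{\geq j}(x) \subseteq \bF_{\geq j}(u_-^s x)$; the reverse inclusion will follow by swapping $x \leftrightarrow u_-^s x$ and $s \leftrightarrow -s$.

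\textbf{Main obstacle.} The structural idea is short; the hard part will be twofold. First, the naive identity (\ref{eq:bruhat:plan}) produces real elements of $U_- A U_+$ only when $1+rs > 0$, which I will circumvent by exploiting part (b) of Lemma~\ref{lemma:Ej:equivariant} to shrink the test window so that $1+rs > 0$ throughout. Second, I must coordinate several almost-sure hypotheses simultaneously (the invariance identities at $x$, $u_-^s x$, and all intermediate base points, together with the absolute continuity of the nonlinear reparametrizations $r \mapsto t'(r), s'(r)$); none of this is deep, but it is where the bookkeeping care must go.
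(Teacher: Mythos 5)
Your proposal is correct and takes essentially the same route as the paper: decompose $u_+^r u_-^s$ through the big Bruhat cell, use the characterization of $\bF_{\geq j}$ via almost every $u \in U_+$ (part (b)), then push through with $A$- and $U_-$-equivariance of $\bE_{\geq j}$. The differences are minor and, in one respect, to your credit. You use the $U_- A U_+$ factorization; the paper writes $u\bar u = \bar u' u' a$ (order $U_- U_+ A$), which is the same cell up to commuting $A$ past $U_+$. The paper's chain goes $\bfv \mapsto (u'a)_*\bfv \in \bF_{\geq j}(u'ax)$ (via (c),(d)) $\subset \bE_{\geq j}(u'ax)$ (via (a)), then applies $\bar u'_*$; you bypass the intermediate appeal to (c),(d),(a) by invoking (b) once at $x$ and then using the $\bE_{\geq j}$-equivariances directly, which is marginally more economical. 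More substantively, you explicitly notice that the identity you call (\ref{eq:bruhat:plan}) produces a real $U_-AU_+$ factorization only on the half-space $1+rs > 0$, and you circumvent this by observing that, as a consequence of parts (b) and (c), membership in $\bF_{\geq j}(u_-^s x)$ can be tested against $\cB_{t_0}$ for any $t_0\in\reals$, in particular a window small enough that $e^{2t_0}|s|<1$. The paper's ``We may write $u\bar u = \bar u' u' a$'' silently assumes this restriction; for $|s|\ge 1$ the decomposition fails on a positive-measure set of $u\in\cB$, so some additional device (your small window, an $A$-conjugation to reduce to small $s$, or the zero--one law of Lemma~\ref{lemma:zero:one:substitute}) is genuinely needed to complete the argument for all $\bar u$. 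Your version spells out exactly the needed care, including the change-of-variables bookkeeping in $r$ and the a.e.\ validity of (\ref{eq:invariance:Vj:hatVj}) along the curve $r \mapsto (s'(r), a^{\tau(r)}u_+^{t'(r)}x)$. The proposal is sound.
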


\bold{Proof.} It is enough to show that (outside a set of measure
$0$), for all $\bfv \in \bF_{\geq j}( x)$,  we have $\bar{u}_* \bfv \in
\bF_{\geq j}(\bar u x)$. Hence, in view of the definition (\ref{eq:def:bFj}) 
of $\bF_{\geq j}$, it is enough
to show that for almost all $u \in U_+$ and $\bar{u} \in U_-$, 
\begin{equation}
\label{eq:Fj:invariant:need:to:prove}
(u \bar{u})_* \bfv \in  \bE_{\geq j}( u \bar{u} x). 
\end{equation}
We may write $u \bar{u} = \bar{u}' u' a$ where $\bar{u}' \in U_-$,
$u' \in U_+$, $a \in A$. Let
\begin{displaymath}
\bfw = (u' a)_* \bfv. 
\end{displaymath}
Then, by Lemma~\ref{lemma:Ej:equivariant} (c) and (d), 
$\bfw \in \bF_{\geq j}(u' a x)$. By
Lemma~\ref{lemma:Ej:equivariant} (a), $\bF_{\geq j}(u' a x) \subset
\bE_{\geq j}(u' a x)$. Therefore
\begin{displaymath}
(u \bar{u})_* \bfv = (\bar{u}' u' a)_* \bfv = \bar{u}'_* \bfw \in
\bar{u}'_*\left( \bF_j(u' a x)\right) \subset \bar{u}'_* \left(\bE_{\geq j}(u' a x)\right).
\end{displaymath}
But, by (\ref{eq:invariance:Vj:hatVj}),
\begin{displaymath}
\bar{u}'_* \left(\bE_{\geq j}(u' a x) \right) = \bE_{\geq j}(\bar{u}' u'
a x) = \bE_{\geq j}(u \bar{u} x). 
\end{displaymath}
Thus, (\ref{eq:Fj:invariant:need:to:prove}) holds. 
\qed\medskip

In view of Lemma~\ref{lemma:Ej:equivariant} (c),  (d) and
Lemma~\ref{lemma:Fj:invariant:Uminus}, we have proved the following:
\begin{proposition}
\label{prop:Fj:invariant:SL2R}
The subspaces $\bF_{\geq j}(x)$ are equivariant under the action of
$SL(2,\reals)$. 
\end{proposition}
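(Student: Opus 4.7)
The plan is straightforward: the three preceding results have already done nearly all the work, and what remains is only to combine them and handle the ``almost every element'' issue carefully so as to conclude equivariance under the whole group generated by $A$, $U_+$, and $U_-$.

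First I would record that Lemma~\ref{lemma:Ej:equivariant}(c) gives $A$-equivariance of $\bF_{\geq j}$ for every $t$, while Lemma~\ref{lemma:Ej:equivariant}(d) gives $U_+$-equivariance for almost every $u\in U_+$, and Lemma~\ref{lemma:Fj:invariant:Uminus} gives $U_-$-equivariance for almost every $\bar u\in U_-$. Let $\mathcal G \subset G$ denote the set of elements $g = \bar u_2 \, a \, u \, \bar u_1$ (in any fixed decomposition obtained from a Bruhat-type product $U_- A U_+ U_-$, or equivalently from the fact that $A U_+ U_-$ is open in $G$ and $U_-$ acts transitively on the complementary coset) where $u$ lies in the full-measure set of $U_+$ for which (d) applies and $\bar u_1, \bar u_2$ lie in the full-measure set of $U_-$ for which Lemma~\ref{lemma:Fj:invariant:Uminus} applies. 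Since each factor is chosen from a co-null set in its respective subgroup and $A$ imposes no restriction, $\mathcal G$ has full Haar measure in $G$, and on $\mathcal G$ the bundle $\bF_{\geq j}$ is equivariant for $\nu$-almost every $x$.

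Next I would upgrade from ``almost every $g$'' to ``every $g$''. Fix $g_0 \in G$. Since $\mathcal G$ has full Haar measure, we may write $g_0 = g_2 g_1$ with $g_1, g_2 \in \mathcal G$ (indeed, $\{g_1 : g_2 = g_0 g_1^{-1} \in \mathcal G\} = g_0^{-1}\mathcal G \cap \mathcal G$ has full measure). Then at almost every $x$,
\[
(g_0)_* \bF_{\geq j}(x) = (g_2)_* (g_1)_* \bF_{\geq j}(x) = (g_2)_* \bF_{\geq j}(g_1 x) = \bF_{\geq j}(g_2 g_1 x) = \bF_{\geq j}(g_0 x),
\]
using equivariance of $\bF_{\geq j}$ under $g_1$ at $x$ and under $g_2$ at $g_1 x$; the latter holds at almost every $x$ because $g_2$-equivariance fails only on a $\nu$-null set, and the image of this null set under $g_1^{-1}$ is still $\nu$-null by $G$-invariance of $\nu$. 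This establishes equivariance under every $g_0 \in G$ and thereby proves the proposition.

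The only place that requires care is the measure-theoretic bookkeeping in the last step, ensuring that the null sets on which equivariance might fail do not accumulate to a positive-measure obstruction; this is handled by the $G$-invariance of $\nu$ together with the fact that $\mathcal G \cdot \mathcal G = G$ modulo null sets. No dynamical input beyond what is already in the three cited lemmas is needed.
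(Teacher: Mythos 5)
Your proof is correct and takes the same route as the paper, which simply asserts that the proposition follows by combining Lemma~\ref{lemma:Ej:equivariant}(c), (d) and Lemma~\ref{lemma:Fj:invariant:Uminus}; you have filled in the implicit details of that combination, including the worthwhile and often-elided upgrade from ``almost every $g$'' to ``every $g$'' via the double-coset argument. One small slip: the set $\{g_1 : g_0 g_1^{-1} \in \mathcal G\}$ equals $\mathcal G^{-1} g_0$ rather than $g_0^{-1}\mathcal G$, but since both are co-null (by unimodularity of $SL(2,\reals)$ and invariance of Haar), this does not affect the conclusion that $\mathcal G \cdot \mathcal G = G$.
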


\subsection{Relaxing the definition of the components of the inert flag.}

\begin{lemma}
\label{lemma:zero:one:substitute}
There exists a subset $\Omega \subset X$ with $\nu(\Omega) =
1$ with the following property.  For $j\in \{0,\ldots, n\}$, $x \in \Omega$, and $\bfv \in
\bH(x)$, let
\begin{equation}
\label{eq:zero:one:substitute}
Q_j(\bfv) = \{ u \in \cB \st u_* \bfv \in \bE_{\geq j}(u x)\}. 
\end{equation}
Then either $|Q_j(\bfv)| = 0$, or $|Q_j(\bfv)| = |\cB|$ (and thus
$\bfv \in \bF_j(x)$). 
\end{lemma}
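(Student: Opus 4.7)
The plan is to derive a scaling identity for the parameter set $\tilde Q_j(\bfv) := \{u \in U_+ : u_*\bfv \in \bE_{\geq j}(ux)\}$ from the $A$-equivariance of $\bE_{\geq j}$, and to combine it with a Lebesgue density argument along the horocycle orbit to upgrade ``positive measure'' to ``full measure'' for $Q_j(\bfv)$.

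From the commutation $a^tu^sa^{-t} = u^{e^{2t}s}$ and the $A$-equivariance (\ref{eq:invariance:g}) of the Oseledets flag, a direct check gives the scaling identity
\[
\tilde Q_j(a^t_*\bfv) \;=\; e^{2t}\,\tilde Q_j(\bfv),
\qquad\text{so}\qquad
|Q_j(a^t_*\bfv)| \;=\; e^{2t}\,\bigl|\tilde Q_j(\bfv)\cap[-e^{-2t},e^{-2t}]\bigr|,
\]
and from $u^{s_0}\cdot u^s = u^{s_0+s}$ the translation identity $\tilde Q_j(u^{s_0}_*\bfv) = \tilde Q_j(\bfv)-s_0$. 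Define $\Omega \subset X$ to be the full $\nu$-measure set of $x$ for which $\bE_{\geq j}(u^sx)$ is defined for Lebesgue-a.e.\ $s\in\reals$ and the continuity conclusion of Corollary~\ref{c.continuity} applies along the $U_+$-orbit of $x$; this has $\nu(\Omega)=1$ by Fubini together with the $U_+$-invariance of $\nu$.

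Fix $x \in \Omega$ and $\bfv \in \bH(x)$ with $|Q_j(\bfv)| > 0$; the goal is $\bfv \in \bF_{\geq j}(x)$, equivalently $|Q_j(\bfv)|=|\cB|$. By the Lebesgue density theorem applied to the measurable set $\tilde Q_j(\bfv)\subset\reals$, select a density-one point $s_0 \in Q_j(\bfv)\cap(-1,1)$. Passing to $\bfv' := u^{s_0}_*\bfv \in \bH(u^{s_0}x)$, the translation identity shows that $0$ is a density-one point of $\tilde Q_j(\bfv')$, so the scaling identity applied to $\bfv'$ yields
\[
|Q_j(a^t_*\bfv')| \;=\; e^{2t}\,\bigl|\tilde Q_j(\bfv')\cap[-e^{-2t},e^{-2t}]\bigr| \;\longrightarrow\; |\cB|\qquad\text{as }t\to\infty.
\]

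The main obstacle is turning this convergence into an exact equality $|Q_j(a^{t_0}_*\bfv')| = |\cB|$ at some finite $t_0$: were such a $t_0$ available, then $a^{t_0}_*\bfv' \in \bF_{\geq j}(a^{t_0}u^{s_0}x)$ by definition, and the $A$- and $U_+$-equivariance of $\bF_{\geq j}$ from Lemma~\ref{lemma:Ej:equivariant}(c),(d) would transport this back to $\bfv\in\bF_{\geq j}(x)$. I would close the gap by invoking the equivalent characterization of $\bF_{\geq j}$ given by Lemma~\ref{lemma:Ej:equivariant}(b) (at every scale $\cB_t$): the scaling identity applied in both directions $t\to\pm\infty$, together with the density-one condition at $0$ for $\tilde Q_j(\bfv')$ and the continuity of $\bE_{\geq j}$ on the set $\cG_{u^{s_0}x}$ from Corollary~\ref{c.continuity} (as used in the proof of Lemma~\ref{lemma:Ej:equivariant}(a)), should force $\tilde Q_j(\bfv')$ to be a $U_+$-conull subset of $\reals$. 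That gives $\bfv'\in\bF_{\geq j}(u^{s_0}x)$, and hence $\bfv\in\bF_{\geq j}(x)$ and $|Q_j(\bfv)| = |\cB|$ as required.
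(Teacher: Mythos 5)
Your scaling identity $\tilde Q_j(a^t_*\bfv) = e^{2t}\tilde Q_j(\bfv)$ is correct (it follows from $a^t u^s a^{-t}=u^{e^{2t}s}$ and the $A$-equivariance of $\bE_{\geq j}$), and the reduction to a density-one point at $0$ with $|Q_j(a^t_*\bfv')|\to|\cB|$ is sound. But the step you flag as the ``main obstacle'' is a genuine gap, and the route you sketch to close it does not work. The scaling identity relates the $Q_j$-set of the \emph{moving} vector $a^t_*\bfv'$ to that of $\bfv'$; it gives no new information about $\tilde Q_j(\bfv')$ itself. Having Lebesgue density one at $0$ does not force a measurable set to be $U_+$-conull: take $\tilde Q_j(\bfv')=[-1/2,1/2]$, which has density one at $0$ yet $|Q_j(\bfv')|=1<2=|\cB|$. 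The scaling identity then just gives $\tilde Q_j(a^t_*\bfv')=[-e^{2t}/2,\,e^{2t}/2]$, consistent with everything you derived, and neither scaling in both directions nor the continuity of $\bE_{\geq j}$ on the positive-measure set $\cG_{u^{s_0}x}$ excludes this configuration.

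The ingredient you are missing is a recurrence argument coupled with a \emph{uniqueness} property, and that uniqueness is exactly what the single-vector formulation lacks. The paper's proof works with the maximal dimension $d$ of a subspace $\bV\subset\bH(x)$ with $|Q_j(\bV)|>0$ over a positive-measure set of base points. Maximality forces two $d$-dimensional subspaces with $|Q_j|>|\cB|/2$ over the same $x$ to coincide (their span would otherwise have $|Q_j|>0$ and dimension $>d$). The paper then applies the Vitali covering lemma to produce a positive-measure set $Y^*$ at which $Q_j(\bV_x)$ has $\cB_{-t}$-density $\geq 1-\epsilon$ for all $t>t_0$, and defines $\Omega$ by Poincar\'e recurrence of $a^{-t}x$ into $Y^*$. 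At a recurrence time $t$, the identity $\cB[x]=a^t\cB_{-t}[a^{-t}x]$ transports the $\geq(1-\epsilon)$ density forward to $x$, and uniqueness of high-density $d$-dimensional subspaces shows the resulting subspace over $x$ is independent of $t$, hence has $Q_j$ of full measure; maximality of $d$ then gives $|Q_j(\bfv)|=0$ for $\bfv$ outside it. For a single vector $\bfv'$ there is no analogous rigidity: when the base point recurs, the projectivized vector $[a^t_*\bfv']$ drifts toward $\proj(\bE_1)$ and cannot be matched to anything at the original point. In a corrected proof your scaling identity would reappear as a restatement of the Vitali density step, but the maximal-dimension device and the recurrence into $Y^*$ are the substantive content that must be added.
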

\bold{Proof.} Fix   $j\in \{0,\ldots, n\}$.  For a subspace $\bV \subset \bH(x)$, let 
\begin{displaymath}
Q_j(\bV) = \{ u \in \cB \st  u_* \bV  \subset \bE_{\geq j}(u x)\}. 
\end{displaymath}
Let $d$ be the maximal number such that there exists 
$Y\subset X$ with $\nu(Y) > 0$ such that for $x \in Y$ there exists a
subspace $\bV \subset \bH(x)$ 
of dimension $d$ with $|Q_j(\bV)| > 0$. 
For a fixed $x \in Y$, let
$\cW(x)$ denote the set of subspaces $\bV$ of dimension $d$ 
for which $|Q_j(\bV)| > 0$. Then, by
the maximality of $d$, if $\bV$ and $\bV'$ are distinct elements of
$\cW(x)$ then $Q_j(\bV) \cap Q_j(\bV') = \emptyset$.  Fix a measurable
collection of subspaces $\bV_x \in \cW(x)$, for $x\in Y$, 
such that $|Q_j(\bV_x)|$ is maximal (among elements of $\cW(x)$).

For $x\in Y$,  $u\in \cB$ and $t>0$, consider the set 
\[D(u,x):=\{ z \in \cB_{-t}  \st z u  \in Q_j(\bV_x)\}.
\]
Let $\epsilon > 0$ be arbitrary, and suppose $x \in Y$.
The Vitali covering lemma  implies that there exists $t_0 >0$ and 
a subset $Q(\bV_x)^* \subset Q(\bV_x) \subset \cB$ such that
\begin{equation}
\label{eq:ux:point:of:density0}
|Q(\bV_x) \setminus Q(\bV_x)^\ast| < \epsilon|Q(\bV_x)|,
\end{equation}
and for all $u \in Q(\bV_x)^*$ and
all $t > t_0$, we have
\begin{equation}
\label{eq:ux:point:of:density}
|D(u,x)| \ge (1-\epsilon) |\cB_{-t}|.   
\end{equation}
(In other words, $Q(\bV_x)^*$ are ``points of $(1-\epsilon)$ density'' for $Q(\bV_x)$,
relative to the  collection of (small) balls $\{\cB_{-t}u \st u\in Q(\bV_x), t>t_0\}$.) 
Let 
\begin{displaymath}
Y^* = \{ u x \st x \in Y, \quad u \in Q(\bV_x)^* \}. 
\end{displaymath}
Then, since $\nu(Y)>0$, the $U_+$-invariance of $\nu$ and (\ref{eq:ux:point:of:density0}) imply that   $\nu(Y^*) > 0$. 
Let $\Omega = \{ x \in X \st a^{-t} x \in Y^*
\hbox{ infinitely often.}\}$. Poincar\'e recurrence implies that $\nu(\Omega) = 1$. Suppose $x \in
\Omega$. We can choose $t >t_0$ such that $a^{-t} x \in
Y^*$.
Note that
\begin{equation}
\label{eq:relation:balls}
\cB[x] = a^t \cB_{-t} [a^{-t}x].
\end{equation}
\mcc{Better to work with $\cB[x]$
  instead of $\cB$}
Let $x' = a^{-t}x \in Y^\ast$, and let $\bV_{t,x} = (a^t)_*
\bV_{x'}$. Then in view of (\ref{eq:ux:point:of:density}) and
(\ref{eq:relation:balls}),  we  have
\begin{equation}
\label{eq:QV:1:minus:epsilon}
|Q(\bV_{t,x})| \ge (1-\epsilon)|\cB| .
\end{equation}
By the maximality of $d$ (and assuming $\epsilon <
  1/2$), $\bV_{t,x}$ does not depend on $t$. 
Hence, for every $x \in \Omega$, there exists $\bV \subset \bH(x)$ such that
$\dim \bV = d$ and $|Q(\bV)| \ge (1-\epsilon) |\cB|$. Since $\epsilon
> 0$ is arbitrary, for each $x \in \Omega$, there exists $\bV \subset
\bH(x)$ with $\dim \bV =d$, and $|Q(\bV)| = |\cB|$. 
Now the maximality of
$d$ implies that if $\bfv \not\in \bV$ then $|Q(\bfv)| = 0$. 
\qed\medskip

\section{Proof of Theorem~\ref{theorem:P:uniq:ergodicity}}
\label{sec:proof:of:uniq:ergodicity}

Now suppose that there is more than one Lyapunov exponent on $\bH$ 
(so that $\lambda_2 < \lambda_1$). Then for almost all $x$, 
$\bF_{\geq 2}(x)$ is a proper, nontrivial subspace of $\bF_{\geq 1}(x) = \bH(x)$. 

Suppose also that the cocycle is irreducible with respect to the measure $\nu$ (so there are no
non-trivial proper equivariant $\nu$-measurable subbundles). Then it follows from
Proposition~\ref{prop:Fj:invariant:SL2R} that for all $j \ge 2$ and 
$\nu$-almost all $x$, 
\begin{equation}
\label{eq:bFj:zero}
\bF_{\geq j}(x) = \{ 0 \}. 
\end{equation}

Let $\hat{\nu}\in \cM_P(\nu)$ be any $P$-invariant measure on the total space of
the projectivized bundle $\proj^1(\bH)$ that projects to $\nu$. 
We may disintegrate $\hat{\nu}$ as follows:
\begin{displaymath}
d \hat{\nu}([\bfv]) = d \nu(\pi(\bfv)) \, d\eta_{\pi(\bfv)}( [\bfv]).
\end{displaymath}

 Lemma~\ref{lemma:zero:one:substitute}   and (\ref{eq:bFj:zero}) imply that for almost every $x\in X$
and every $\bfv\in \bH(x)\setminus \{0\}$:
\begin{equation}
|\{ u \in \cB \st u_* \bfv \in \bE_{\geq 2}(u x) \}| = 0. 
\end{equation}
In other words, for almost every $x$ and every nonzero $\bfv\in \bH(x)$, we have 
$u_\ast\bfv \notin \bE_{\geq 2}(ux)$, for almost every $u\in \cB$.  

The $U_+$-invariance of $\hat\nu$ then implies the following claim.

\begin{claim}
For $\nu$-almost all $x$, the disintegration $\eta_x$ of $\hat \nu$ is
supported in the set 
$\proj\left(\bH(x)\setminus  \bE_{\geq 2}(x)\right)$. 
\end{claim}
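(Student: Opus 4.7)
The plan is to reduce the claim to showing that the measurable subset
\[
A \;:=\; \{(x,[\bfv]) \in \proj(\bH) \st \bfv \in \bE_{\geq 2}(x)\}
\]
of $\proj(\bH)$ has $\hat\nu$-measure zero. Indeed, by the disintegration formula
$
\hat\nu(A) = \int_X \eta_x\!\left(\proj(\bE_{\geq 2}(x))\right) d\nu(x),
$
so $\hat\nu(A) = 0$ forces $\eta_x(\proj(\bE_{\geq 2}(x))) = 0$ for $\nu$-almost every $x$, which is exactly the stated conclusion that $\eta_x$ is supported in $\proj(\bH(x)\setminus \bE_{\geq 2}(x))$.

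To establish $\hat\nu(A) = 0$, I would combine the $U_+$-invariance of $\hat\nu$ with the pointwise statement recalled just before the claim, via a Fubini-type averaging argument. Since $\cB \subset U_+$ and $\hat\nu$ is $U_+$-invariant, for each $u \in \cB$ one has $\hat\nu(u^{-1} A) = \hat\nu(A)$, where $u^{-1} A = \{(x,[\bfv]) \st u_\ast \bfv \in \bE_{\geq 2}(ux)\}$. Averaging in $u$ over $\cB$ against Lebesgue measure and swapping the order of integration yields
\[
\hat\nu(A) \;=\; \frac{1}{|\cB|} \int_{\proj(\bH)} \left|\{u \in \cB \st u_\ast \bfv \in \bE_{\geq 2}(ux)\}\right| \, d\hat\nu(x,[\bfv]) \;=\; \frac{1}{|\cB|}\int_{\proj(\bH)} |Q_2(\bfv)| \, d\hat\nu(x,[\bfv]),
\]
where $Q_2(\bfv)$ is the set introduced in Lemma~\ref{lemma:zero:one:substitute}.

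The integrand now vanishes $\hat\nu$-almost everywhere. Indeed, Lemma~\ref{lemma:zero:one:substitute} combined with (\ref{eq:bFj:zero}) says that for $\nu$-almost every $x$ and every nonzero $\bfv \in \bH(x)$, one has $|Q_2(\bfv)| = 0$: the only alternative permitted by the zero--one dichotomy is $|Q_2(\bfv)| = |\cB|$, which would force $\bfv \in \bF_{\geq 2}(x) = \{0\}$, a contradiction. Since $\hat\nu$ disintegrates over $\nu$ and its fiber measures live on projective space (so only nonzero representatives contribute), the integrand vanishes $\hat\nu$-a.e., hence $\hat\nu(A) = 0$ as required.

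The main obstacle is essentially bookkeeping rather than substantive: one must verify joint measurability of the map $(x,[\bfv], u) \mapsto \chi_{\bE_{\geq 2}(ux)}(u_\ast \bfv)$ in order to legitimately apply Fubini. This follows from the measurability of the subbundle $\bE_{\geq 2} \subset \bH$ together with the continuity of the $G$-action on $\bH$. Once this is in hand, the argument is a short two-step consequence of $U_+$-invariance and the dichotomy in Lemma~\ref{lemma:zero:one:substitute}; all of the real analytic content has already been packaged into the lemma and into the irreducibility-driven vanishing $\bF_{\geq j} = \{0\}$ for $j \geq 2$.
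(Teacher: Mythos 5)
Your proposal is correct, and it is a genuinely different (and arguably cleaner) route than the one in the paper. The paper first passes to the $P$-ergodic decomposition of $\hat\nu$, then argues by contradiction using the Mean Ergodic Theorem for amenable groups: assuming the bad set $Z\subset\proj(\bE_{\geq 2})$ has positive $\hat\nu$-mass, the Mean Ergodic Theorem produces, for $\hat\nu$-a.e.\ $[\bfv]$, a positive-Haar-measure set $\cC_{[\bfv]}\subset P$ of times $h$ with $h_\ast[\bfv]\in Z$; since $Z$ is $a^t$-invariant, $\cC_{[\bfv]}$ is $a^t$-invariant and so meets $U_+$ in positive Lebesgue measure, and then Lemma~\ref{lemma:zero:one:substitute} forces $\bfv\in\bF_{\geq 2}(x)$, contradicting (\ref{eq:bFj:zero}). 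Your argument bypasses both the ergodic decomposition and the Mean Ergodic Theorem entirely: you average the $U_+$-invariance $\hat\nu(u^{-1}A)=\hat\nu(A)$ over $u\in\cB$, apply Fubini, and land directly on $\hat\nu(A)=\frac{1}{|\cB|}\int|Q_2(\bfv)|\,d\hat\nu$, whose integrand vanishes $\hat\nu$-a.e.\ by exactly the same input (Lemma~\ref{lemma:zero:one:substitute} plus $\bF_{\geq 2}=\{0\}$). Both proofs hinge on the same zero--one dichotomy; what your approach buys is the elimination of the reduction to ergodic components and of the appeal to the Mean Ergodic Theorem, replaced by a one-line Fubini computation. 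The only technical point to verify in your version is the joint measurability of $(u,[\bfv])\mapsto\chi_A(u\cdot[\bfv])$, which you correctly flag and which indeed follows from measurability of $\bE_{\geq 2}$ and continuity of the $G$-action; with that in place, your argument is complete and valid.
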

\begin{proof}First notice that almost all $P$-invariant measures in the $P$-ergodic desintegration of $\hat \nu$ project on $\nu$. 
Therefore it is suffices to prove the claim under the extra assumtion that $\hat \nu$ is ergodic for $P$. 

We argue by contradiction, assuming that there is a measurable set $Z$ contained in $\proj(\bE_{\geq 2})$  and  a positive $\nu$-measure set $Y\subset X$ such  that $\eta_y(Z(y))>0$ for $y\in Y$, where $Z(y)= Z\cap \proj(\bH(y))$. 
In other words $\hat\nu(Z)>0$.  As the bundle $\bE_{\geq 2}$ and the disintegrations $\eta_y$ are equivariant under the action
of $\{a^t\}$, we may assume that $Y$ and $Z$ are invariant under the action of $a^t$.

Since $P$ is amenable, and $\hat \nu$ is ergodic with respect to $P$, the Mean Ergodic Theorem for amenable actions (see  \cite[Theorem 8.13]{EinsiedlerWard}) implies that for $\hat\nu$-almost every point $[\bfv]$ in $\proj(\bH)$ there is 
a set  $\cC_{[\bfv]}\subset P$ of positive measure with respect to Haar measure on  $P$,  such that for
$h\in \cC_{[\bfv]}$, we have $h_*([\bfv])\in Z$, 
and therefore $h_*([\bfv])\in \proj(\bE_{\geq 2}(hx))$,  
where $x\in X$ is the base point of $[\bfv]$. 

As $Z$ is invariant under the action of $a^t$, the set $\cC_{[\bfv]}$ is invariant under the action of $a^t$ on the group $P$; it follows that
$\cC_{[\bfv]}$ intersects ${U_+}$ in a set of positive Lebesgue measure.  According to Lemma~\ref{lemma:zero:one:substitute}, this implies
that $[\bfv]\in \bF_{\geq 2}(x)$.  In particular, one gets that $\bF_{\geq 2}(x)$ is not trivial for $\nu$-almost every point $x\in X$, contradicting 
 the assumption of $\nu$-irreducibility. 
\end{proof}

Using the $A$-invariance of $\hat\nu$  and Oseledets' theorem,  we obtain the next claim, which gives the main conclusion of Theorem~\ref{theorem:P:uniq:ergodicity}:
\begin{claim}$\eta_x$ must in fact be supported in $\proj({\bE_1}(x))$ for
$\eta$-a.e.\ $x$.
\end{claim}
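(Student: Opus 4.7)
My plan is to exploit the $A$-invariance of $\hat\nu$ together with the contracting dynamics of $\{a^t\}$ on the complement of $\proj(\bE_{\geq 2})$ to force the disintegration to concentrate on $\proj(\bE_1)$. By the preceding claim, $\hat\nu$ is carried by $\Omega^{\ast} := \proj(\bH) \setminus \proj(\bE_{\geq 2})$. For $\nu$-a.e. $x$, using the (measurable) splitting $\bH(x) = \bE_1(x) \oplus \bE_{\geq 2}(x)$, write every $[\bfv] \in \Omega^{\ast}$ in the form $[\bfv_1 + \bfv']$ with $\bfv_1 \in \bE_1(x)\setminus \{0\}$ and $\bfv' \in \bE_{\geq 2}(x)$, and define
\[
\theta([\bfv]) := \frac{\|\bfv'\|_x}{\|\bfv_1\|_x},
\]
which is well-defined on projective classes. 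Set $h := \min(\theta, 1)$, extended by $h \equiv 1$ on the $\hat\nu$-null set $\proj(\bE_{\geq 2})$, so that $h$ is a bounded measurable function on $\proj(\bH)$ that vanishes precisely on $\proj(\bE_1)$.

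Oseledets' theorem provides, for each $\epsilon > 0$, a measurable function $C_\epsilon : X \to [1,\infty)$, finite $\nu$-a.e., such that for all $t \geq 0$ and $\nu$-a.e. $x$,
\[
\|a^t_* \bfv_1\|_{a^tx} \geq C_\epsilon(x)^{-1} e^{(\lambda_1 - \epsilon)t} \|\bfv_1\|_x, \qquad \|a^t_* \bfv'\|_{a^tx} \leq C_\epsilon(x) e^{(\lambda_2 + \epsilon)t} \|\bfv'\|_x.
\]
Choosing $\epsilon < (\lambda_1-\lambda_2)/2$ gives
\[
\theta(a^t_*[\bfv]) \;\leq\; C_\epsilon(x)^2 \, e^{-(\lambda_1 - \lambda_2 - 2\epsilon) t} \, \theta([\bfv]) \;\longrightarrow\; 0
\]
as $t \to +\infty$, pointwise for $\hat\nu$-a.e. $[\bfv]$ (using that $\pi_* \hat\nu = \nu$ together with the preceding claim to ensure both $\bfv \notin \bE_{\geq 2}(\pi(\bfv))$ and $\pi(\bfv)$ Oseledets-regular).

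Since $0 \leq h \leq 1$, the dominated convergence theorem yields $\int h \circ a^t \, d\hat\nu \to 0$ as $t \to +\infty$. But $\hat\nu$ is $A$-invariant, so this integral is independent of $t$ and equals $\int h \, d\hat\nu$. Hence $\int h \, d\hat\nu = 0$, forcing $\theta = 0$ $\hat\nu$-a.e., which is precisely the condition $\bfv \in \bE_1(\pi(\bfv))$. Disintegrating, this means $\eta_x$ is supported in $\proj(\bE_1(x))$ for $\nu$-a.e. $x$. The one delicate point I anticipate is that the Oseledets constants $C_\epsilon(x)$ depend on the base point, so the convergence $\theta\circ a^t \to 0$ is merely pointwise and not uniform; this rules out any approach via continuous test functions, but the uniform bound $h \leq 1$ makes the bounded convergence step immediate, so no further estimate is required.
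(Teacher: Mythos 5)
Your proof is correct and follows essentially the same route as the paper: you introduce the same function (the paper's $\alpha$, your $\theta$) measuring the $\bE_{\geq 2}$-component against the $\bE_1$-component, and use the Lyapunov gap to show it decays under $a^t$. The only difference is the closing step — the paper invokes Poincar\'e recurrence (a set where $\alpha \in [a,b] \subset (0,\infty)$ would have to recur but cannot, since $\alpha \circ a^t \to 0$), while you invoke bounded convergence together with $A$-invariance of $\hat\nu$; these are interchangeable ways to finish the same argument.
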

\begin{proof} To any point $[\bfv]$ of $\proj\bH(x)\setminus \proj(\bE_{\geq 2}(x))$ at a point $x$ generic for $\nu$ for which the Lyapunov spaces are well defined, 
we associate a real number $\alpha([\bfv])$ as follows: there is a vector $\bfv\in [\bfv]$ so that $\bfv=\bfv_1+\alpha([\bfv]) \bfv_{\geq 2}$ where $\bfv_1\in \bE_1(x)$ and 
$\bfv_{\geq 2}\in \bE_{\geq 2}(x)$ are unit vectors. 

As $a^t$ preserves the Lyapunov spaces, the set where $\alpha$ is defined is invariant under the action of $a^t$ on $\proj \bH$. Furthermore,
by definition of the Lyapunov spaces one gets
$$\lim_{t\to +\infty}\alpha( (a^t)_*([\bfv]))= 0,$$
for any $[\bfv]\in \proj\bH(x)\setminus \proj(\bE_{\geq 2}(x))$, where  $x$ is generic for $\nu$. 

The previous claim said that $\alpha$ is well defined $\hat\nu$-almost everywhere. 
We want to prove that $\alpha$ vanishes $\hat\nu$-almost everywhere. 

Otherwise, there is a compact interval $[a,b]\subset (0,+\infty)$ so that $\hat\nu(Z_{a,b})>0$ where $Z_{a,b}=\{(x,[\bfv]): \alpha([\bfv])\in[a,b]\}$. 
Thus $Z_{a,b}$ is a set of positive measure but every point in $Z_{a,b}$ has only finitely many positive return times for $a^t$ in $Z_{a,b}$, 
contradicting the Poincar\'e recurrence theorem. This contradiction concludes the proof. \end{proof}

If, moreover, the top Lyapunov exponent
$\lambda_1$ is simple, it follows that for almost all $x$, $\eta_x$
must be the Dirac measure supported on $\proj(\bE_1(x))$. Hence, in this
case, there is
only one invariant measure.\qed\medskip

\section{Proof of Theorem~\ref{theorem:convergence:of:Lyapunov:exponents}}
\label{sec:proof:of:theorem:convergence}
{ 
We begin with the general observation (due to Furstenberg) that Lyapunov exponents of a cocycle on $\bH$ can be computed explicitly as integrals over $\proj(\bH)$.

 Let  $\pi\colon \bH\to X$ be a bundle  with an $A=\{a^t\}$ action,
and let $\|\cdot \|_x$ be a Finsler on $\bH$.   Let $\nu$ be an ergodic $A$-invariant probability measure
on $X$ satisying the integrability condition
 (\ref{eq:cocycle:integrability}).  Define 
$\sigma\colon A\times \proj(\bH) \to \reals$ by
\begin{equation}\label{e=sigmadef}\sigma(a^t, [\bfv] ) = \log\frac{\|a^t_\star(\bfv)\|_{g(\pi(\bf v))}}{\|\bfv\|_{\pi(\bf v)}},
\end{equation}
where $\bfv$ is any nonzero vector representing the projective class $[\bf v] := {\mathbb F} \bf v \in \proj(\bH)$ (where ${\mathbb F}$ is the base field).
Note that $\sigma$ is a real-valued cocycle over the action of $A$ on $\proj({\bH})$.  By (\ref{eq:cocycle:integrability}), this cocycle
is integrable with respect to {\em any}  probability measure $\hat\nu$ on $\proj(\bH)$ projecting to $\nu$.

Denote by $\lambda_1(\nu)$ the top Lyapunov exponent for $a^t$ with respect to $\nu$, as in 
(\ref{eq:osceledts:two:sided:splitting}). Let
\begin{displaymath}
Z_1(\nu) = \{ \bfv \in \proj^1(\bf H) \st \bfv \in \proj(\bE_1(\pi(v)))\},
\end{displaymath}
where $\bE_1$ is as in (\ref{eq:osceledts:two:sided:splitting}). 

We
recall the following elementary result:
\begin{lemma}
\label{lemma:furstenberg:formula}
Let $\hat{\nu}$ be any $a^t$-invariant measure on $\proj(\bH)$ that
projects to $\nu$ and is supported in $Z_1(\nu)$. Then
\begin{displaymath}
\lambda_1(\nu) = \int_{ \proj^1(\bf H)} 
\sigma(a^1, [\bfv]) \, d\hat{\nu}([\bfv]).
\end{displaymath}
(Here $a^1$ means $a^t$ for $t=1$).
\end{lemma}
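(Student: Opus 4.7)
The plan is to combine the cocycle identity for $\sigma$ with Birkhoff's ergodic theorem on $\proj(\bH)$ and the Oseledets characterization of the top Lyapunov subspace $\bE_1$.

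First, I would verify that $\sigma(a^1,\cdot)$ is integrable with respect to $\hat\nu$. For any nonzero $\bfv$ with $x=\pi(\bfv)$,
\[
-\log\|a^{-1}_\ast\|_{a^1x} \;\le\; \sigma(a^1,[\bfv]) \;\le\; \log\|a^1_\ast\|_x,
\]
since $\|a^1_\ast\bfv\|_{a^1x} \ge \|\bfv\|_x/\|a^{-1}_\ast\|_{a^1x}$. Using $\nu$-invariance and the integrability assumption (\ref{eq:cocycle:integrability}) on the $t\in[-1,1]$ range, both bounds are in $L^1(\nu)$, and since $\hat\nu$ projects to $\nu$, it follows that $\sigma(a^1,\cdot)\in L^1(\hat\nu)$.

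Next, I would use that $\sigma$ is an additive cocycle over the $A$-action on $\proj(\bH)$, so that for every positive integer $n$,
\[
\sigma(a^n,[\bfv]) \;=\; \sum_{k=0}^{n-1}\sigma\bigl(a^1,(a^k)_\ast[\bfv]\bigr).
\]
Since $\hat\nu$ is $a^1$-invariant (though not necessarily $a^1$-ergodic), Birkhoff's ergodic theorem applied to $\sigma(a^1,\cdot)$ yields, for $\hat\nu$-a.e.\ $[\bfv]$, a limit
\[
\bar\sigma([\bfv]) \;=\; \lim_{n\to\infty}\frac{1}{n}\sigma(a^n,[\bfv]),
\]
where $\bar\sigma$ is $a^1$-invariant and satisfies $\int\bar\sigma\,d\hat\nu=\int\sigma(a^1,\cdot)\,d\hat\nu$.

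The last ingredient is to identify this limit $\hat\nu$-almost everywhere. Since $\hat\nu$ is supported on $Z_1(\nu)$, for $\hat\nu$-a.e.\ $[\bfv]$ the base point $x=\pi(\bfv)$ is $\nu$-generic for Oseledets and $\bfv\in\bE_1(x)\setminus\{0\}$. The defining property of $\bE_1$ then gives
\[
\lim_{n\to\infty}\frac{1}{n}\log\frac{\|a^n_\ast\bfv\|_{a^nx}}{\|\bfv\|_x} \;=\; \lambda_1(\nu),
\]
which is exactly $\lim_{n\to\infty}n^{-1}\sigma(a^n,[\bfv])$. Hence $\bar\sigma([\bfv])=\lambda_1(\nu)$ $\hat\nu$-almost everywhere, and integrating,
\[
\int_{\proj(\bH)}\sigma(a^1,[\bfv])\,d\hat\nu([\bfv]) \;=\; \int\bar\sigma\,d\hat\nu \;=\; \lambda_1(\nu).
\]

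The only genuinely delicate point is the integrability check at the start; after that, the argument is the standard Furstenberg cocycle identity combined with Birkhoff and Oseledets. Note that no ergodicity of $\hat\nu$ under $a^1$ is used—the Oseledets identification pins down $\bar\sigma$ pointwise, so that averaging out any nontriviality in the ergodic decomposition of $\hat\nu$ is automatic.
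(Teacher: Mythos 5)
Your proof is correct and takes essentially the same route as the paper: both arguments integrate the cocycle identity for $\sigma$, exploit $a^t$-invariance of $\hat\nu$, and identify the limit via Oseledets using the support hypothesis. Your version makes explicit two points the paper's proof leaves implicit — the $L^1$-integrability check for $\sigma(a^1,\cdot)$ and the use of Birkhoff's theorem (its $L^1$ conclusion) to justify passing the limit through the integral on the left-hand side — but these are exactly the tools needed to make the paper's final ``tends to $\lambda_1$'' step rigorous, so the arguments are the same in substance.
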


\bold{Proof.} Suppose $t \in \natls$. Since $\sigma$ is a cocycle,
\begin{displaymath}
\sigma(a^t, [\bfv]) = \sum_{n=1}^t \sigma(a^1, [a^{t-n}_\ast\bfv])). 
\end{displaymath}
Since $\hat{\nu}$ is $a^t$-invariant, integrating both sides over $\proj^1(\bH)$ with respect to $\hat{\nu}$ and dividing both
sides by $t$, we get
\begin{multline}
\label{eq:tmp:integration}
\frac{1}{t} \int_{\proj^1(\bH)} \sigma(a^t,  [\bfv]) \,
d\hat{\nu}([\bfv]) = \frac{1}{t} \sum_{n=1}^t \int_{\proj^1(\bH)} \sigma(a^1, [\bfv]) \, d \hat{\nu}( [a^{t-n}_\ast\bfv]) = \\ = \int_{\proj^1(\bH)}  \sigma(a^1,[\bfv]) \, d \hat{\nu}([\bfv]).
\end{multline}
However, in view of the assumption that $\hat{\nu}$ is supported in
$Z_1$,  it follows from ergodicity of $\nu$ and the
multiplicative ergodic theorem that the left-hand side of
(\ref{eq:tmp:integration}) tends to $\lambda_1(\nu)$ as $t \to
\infty$. 
\qed\medskip

\bold{Proof of Theorem~\ref{theorem:convergence:of:Lyapunov:exponents}.}

{  Let $\cN_n$, $\cN$, $\nu_{\cN_n}$, $\nu_{\cN}$ be as in
Theorem~\ref{theorem:convergence:of:Lyapunov:exponents}. 
Fix the Finsler structure $\|\cdot\|_x$ on the Hodge bundle  over $ \cH_1(\beta)$ so that the 
Kontsevich-Zorich cocycle satisfies the uniform integrability condition in (\ref{eq:cocycle:uniformintegrability}).}

By \cite[Theorem~A.6]{EM} (which is essentially due to Forni
\cite{Forni:Deviation}), the Kontsevich-Zorich cocycle restricted to
the affine manifold $\cN$ is semisimple. This means that 
after passing to a finite cover, we have the direct sum decomposition
\begin{equation}
\label{eq:decomp:H1}
H^1(M,\reals) = \bigoplus_{i=1}^m W_i(x)
\end{equation}
where the $W_i$ are $G$-equivariant, and the restriction of the
cocycle to each $W_i$ is strongly irreducible. The map $x \mapsto W_i(x)$
is {\em a priori} only measurable, but 
Theorem~\ref{theorem:filip} implies that it is continuous (and in fact
real analytic). 

Note that by
Theorem~\ref{theorem:mozes-shah}, we have $\cN_n \subset \cN$ for
sufficiently large $n$. Then, for sufficiently large $n$, the
decomposition (\ref{eq:decomp:H1}) also holds for each $\cN_n$. 

It is clearly enough to prove the theorem for the restriction of the
cocycle to each $W_i$. We thus let $\bH(x) = W_i(x)$ (for some fixed
$i$), and we may now assume that the restriction $\alpha_\bH: G \cross \bH\to \bH$ of the
Kontsevich-Zorich cocycle to $\bH$ is strongly irreducible on $\cN$.  

Let $\lambda_1(\cN_n)$ and $\lambda_1(\cN)$ denote the top
Lyapunov exponents of $\alpha_\bH$ with respect to the affine measures
$\nu_{\cN_n}$ and $\nu_\cN$. 

Note that for $\nu_{\cN_n}$-a.e. $x \in X$ the set
$\proj (\bE_1(x)) \subset \proj(\bH)(x)$ is closed, and the set 
$Z_1(\nu_{\cN_n})$ is $P$-invariant. Then
by the amenability of $P$, for each $n$ there exists
a $P$-invariant measure $\hat{\nu}_n$ on $\proj^1(\bH)$
such that $\hat{\nu}_n$ projects to $\nu_{\cN_n}$ and is supported in 
$Z_1(\nu_{\cN_n})$. By Lemma~\ref{lemma:furstenberg:formula}, 
\begin{equation}
\label{eq:lambda1:hat:nu:n}
\lambda_1(\cN_n) = \int_{ \proj^1(\bH)} \sigma(a^1, [\bfv])
\, d\hat{\nu}_n([\bfv]), 
\end{equation}
where 
$\sigma\colon A\times \proj(\bH) \to \reals$ is the cocycle defined by (\ref{e=sigmadef}).

Let $\hat{\nu}$ be any weak-star limit of the measures
$\hat{\nu}_n$. Then $\hat{\nu}$ is a $P$-invariant measure that
projects to $\nu_\cN$. By Theorem~\ref{theorem:P:uniq:ergodicity},
(each ergodic component of) 
$\hat{\nu}$ is supported in $Z_1(\nu_\cN)$.

 Therefore, by
Lemma~\ref{lemma:furstenberg:formula}, 
\begin{equation}
\label{eq:lambda1:hat:nu}
\lambda_1(\cN) = \int_{\proj^1(\bH)} \sigma(a^1, [\bfv])
\, d\hat{\nu}([\bfv]). 
\end{equation}

{  Although $X =  \cH_1(\beta)$ is not compact,
it follows from \cite{Eskin:Masur}
that for every $\delta > 0$ there exists a
compact set $K_\delta \subset  \cH_1(\beta)$ such that for every
$G$-invariant measure $\nu$, we have $\nu(K_\delta) > 1-\delta$.
Given $\epsilon>0$, let $\delta>0$ be given by the uniform integrability assumption (\ref{eq:cocycle:uniformintegrability}), and fix the compact set $K_\delta$.

Consider the restriction  $\proj_{K_\delta}(\bH)$  of  $\proj(\bH)$  to this compact set.
The weak convergence $ \hat{\nu}_n   \to  \hat{\nu}$ implies that the integral of the continuous, compactly supported function ${\bf 1}_{\proj_{K_\delta}(\bH)} \cdot \sigma$ with respect to $\hat\nu_n$ converges to the integral with respect to $\hat\nu$, and the integral of ${\bf 1}_{\proj{\bH} \,\setminus\, \proj_{K_\delta}(\bH))} \cdot \sigma$ with respect to all $\hat\nu_n$ and $\hat\nu$ is  less than $\epsilon$, by the uniform integrability assumption (\ref{eq:cocycle:uniformintegrability}).  Since $\epsilon>0$ is arbitrary, we conclude that the integrals in 
(\ref{eq:lambda1:hat:nu}) converge as $n\to\infty$ to the integral in  (\ref{eq:lambda1:hat:nu:n}), and so
 $\lambda_1(\cN_n) \to \lambda_1(\cN)$. }

To show convergence of other Lyapunov exponents, it suffices to repeat
the argument for the cocycle acting on exterior powers of $H^1(M,
\reals)$. We note that the cocycle remains semisimple in this setting
and the analogue of (\ref{eq:decomp:H1}) still holds (see
\cite{Filip}).
\qed\medskip

\end{document}